\newtheorem{theorem}{Theorem} % If subsection, get 3.1.2, etc.
\newtheorem{corollary}{Corollary}
\newtheorem{proposition}{Proposition}
\newtheorem{lemma}{Lemma}
\newtheorem*{proof-claim}{Proof}
\theoremstyle{definition}
\newenvironment{changemargin}[2]{\begin{list}{}{%
\setlength{\topsep}{0pt}%
\setlength{\leftmargin}{0pt}%
\setlength{\rightmargin}{0pt}%
\setlength{\listparindent}{\parindent}%
\setlength{\itemindent}{\parindent}%
\setlength{\parsep}{0pt plus 1pt}%
\addtolength{\leftmargin}{#1}%
\addtolength{\rightmargin}{#2}%
}\item }{\end{list}}
\def\leq{\leqslant}
\def\geq{\geqslant}
\def\A{\mathcal{A}}
\def\NP{\operatorname{NP}}
\def\cd{\operatorname{cd}}
\def\HH{\mathcal{H}}
\def\K{\mathcal{K}}
\def\P{\mathcal{P}}
\def\ST{\operatorname{ST}}
\def\Xind{\operatorname{Xind}}
\def\Hom{\operatorname{Hom}}
\def\N{\mathbb{N}}
\def\stab{\text{\textup{-stab}}}
\def\KG{\operatorname{KG}}
\def\SG{\operatorname{SG}}
\begin{document} 

\title{Colorings of complements of line graphs}

\author{Hamid Reza Daneshpajouh}
\address{H. Daneshpajouh,
Universit\'e Paris Est, CERMICS, 77455 Marne-la-Vall\'ee CEDEX, France}
\email{hr.daneshpajouh@enpc.fr}

\author{Fr\'ed\'eric Meunier}
\address{F. Meunier, Universit\'e Paris Est, CERMICS, 77455 Marne-la-Vall\'ee CEDEX, France}
\email{frederic.meunier@enpc.fr}

\author{Guilhem Mizrahi}
\address{G. Mizrahi, Risk\&{}Co, 92300 Levallois-Perret, France}
\email{guilhem.mizrahi@gmail.com}

\begin{abstract}
Our purpose is to show that complements of line graphs enjoy nice coloring properties. We show that for all graphs in this class the local and usual chromatic numbers are equal. We also prove a sufficient condition for the chromatic number to be equal to a natural upper bound. A consequence of this latter condition is a complete characterization of all induced subgraphs of the Kneser graph $\KG(n,2)$ that have a chromatic number equal to its chromatic number, namely $n-2$. In addition to the upper bound, a lower bound is provided by Dol'nikov's theorem, a classical result of the topological method in graph theory. We prove the $\NP$-hardness of deciding the equality between the chromatic number and any of these bounds.

The topological method is especially suitable for the study of coloring properties of complements of line graphs of hypergraphs. Nevertheless, all proofs in this paper are elementary and we also provide a short discussion on the ability for the topological methods to cover some of our results.
\end{abstract}

\keywords{chromatic number; colorability defect; Kneser graph; line graph; local chromatic number; $\NP$-hardness}

\maketitle 

\section*{Introduction}\label{sec:intro}

\subsection*{Context} The study of a hypergraph through coloring properties of the complement of its line graph is a standard approach in topological combinatorics. The complement of such a line graph is in general termed as a ``Kneser graph'' and appears in various works in this area (see the references stated later in the paper). It also plays an important role in other areas of mathematics, e.g., for the study of ideals of $\N$~\cite{alon2009stable} or to get information on the embeddability of simplicial complexes~\cite{Sa90,Sa91}. The classical Kneser graph, whose chromatic number has been determined by L\'aszl\'o Lov\'asz in his breakthrough paper marking the introduction of the Borsuk-Ulam theorem in combinatorics~\cite{Lo78}, is nothing else but the complement of the line graph of a complete uniform hypergraph.

In this paper, we focus on the special case when the hypergraph is actually a graph, i.e., we deal with complements of line graphs of graphs. %Parallel edges are allowed. 
We show that in this case these complements enjoy much stronger properties than those known (or even possible?) for the general case. We avoid any topological argument, keeping everything at a very elementary level. Actually, we give evidence for the inability of the topological method to recover some of our results.

\subsection*{Local chromatic number} Our first contribution deals with the local chromatic number. The {\em local chromatic number} of a graph $G=(V,E)$, denoted by $\psi(G)$ and introduced by Paul Erd\H{o}s et al.~\cite{ErFuHaKoRo86}, is the minimum number $t$ such that there is a proper coloring of $G$ with a number of colors present in any closed neighborhood of a vertex not exceeding $t$. In other words, we have 
$$\psi(G)=\min_c\max_{v\in V}\left|c(N[v])\right|,$$ where the minimum is taken over all proper colorings $c$ of $G$, and where $$N[v]=\{u\in V\colon\text{$u$ is a neighbor of $v$ in $G$}\}\cup\{v\}$$ is the {\em closed neighborhood of $v$}. 

The local chromatic number is at most the usual chromatic number, and they are sometimes equal, e.g., when the graph is perfect; see \cite{KPS05} where it is proved that the local chromatic number is lower bounded by the fractional chromatic number, which is a stronger result. 
With the following theorem, we identify another family of graphs for which the equality between local and usual chromatic numbers holds.

\begin{theorem}\label{thm:loc}
If $G$ is the complement of a line graph, then $\psi(G)=\chi(G)$. 
\end{theorem}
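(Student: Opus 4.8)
The plan is to reduce to the inequality $\psi(G)\ge\chi(G)$, since $\psi(G)\le\chi(G)$ holds for every graph. Write $G=\overline{L(H)}$. First comes an elementary structural observation: the independent sets of $G$ are exactly the families of edges of $H$ that pairwise intersect, and such a family is either a subfamily of the edge-set $\delta_H(v)$ of some vertex $v$ (a \emph{star}) or the three edges of a triangle of $H$ (the triangle case cannot contain two edges through a common vertex only). Hence a proper colouring $c$ of $G$ is nothing but a partition of $E(H)$ into stars and triangles. For a vertex $e=\{a,b\}$ of $G$ (an edge of $H$) one has $N[e]=\{e\}\cup E(H-a-b)$, so the colours absent from $c(N[e])$ are precisely the colours $\ne c(e)$ whose class is contained in $\delta_H(a)\cup\delta_H(b)$; writing $m$ for the number of colours used by $c$, this gives $|c(N[e])|=m-\mu_c(e)$, where $\mu_c(e)$ counts those \emph{confined} classes.

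It therefore suffices to prove: for every proper colouring $c$ of $G$ there is an edge $e=\{a,b\}$ of $H$ with $\mu_c(e)\le m-\chi(G)$, i.e.\ $|c(N[e])|\ge\chi(G)$; for then $\max_e|c(N[e])|\ge\chi(G)$ for every $c$, which is $\psi(G)\ge\chi(G)$. I would get my hands on such an $e$ by an extremal choice: take $e=\{a,b\}$ minimising $\mu_c(e)$ and argue by contradiction. (Alternatively one can organise this as an induction on $|E(H)|$: if some vertex $f$ of $G$ is not $\chi$-critical, apply the inductive hypothesis to $G-f=\overline{L(H-f)}$, which has the same chromatic number, to get a vertex $e'\ne f$ with $|c(N_{G-f}[e'])|\ge\chi(G)$, and $N_{G-f}[e']\subseteq N_G[e']$ finishes it — reducing to the case $G$ vertex-critical, where the extremal argument still has to be run.) The mechanism in both routes is the same: if the chosen $e$ failed, then $\mu_c(e)\ge m-\chi(G)+1$, and taking the edges of those confined classes and re-colouring them with strictly fewer colours would contradict the minimality of $\chi(G)$.

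The engine, and the main obstacle, is this re-partitioning. The confined classes all lie inside the ``double star'' $\delta_H(a)\cup\delta_H(b)$, which splits into two stars (centred at $a$ and at $b$), so crudely they re-compress into two classes — this only yields $\chi(G)\le\psi(G)+2$, and \textbf{closing the last two units is where I expect the real work to lie.} The star/triangle dichotomy helps in one pleasant way: a triangle confined to $\delta_H(a)\cup\delta_H(b)$ must use the edge $\{a,b\}$ and hence \emph{is} the class $C_{c(e)}$ of $e$; so among the confined classes other than $C_{c(e)}$ there are no triangles — only stars centred at $a$, stars centred at $b$, ``cross'' classes $\{\{x,a\},\{x,b\}\}$, and isolated edges at $a$ or at $b$. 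One then has to choose $e$ so that these stray classes can be absorbed into $C_{c(e)}$ and into one another — for instance arranging, using the extremality of $e$, that every edge at $b$ other than $\{a,b\}$ already belongs to a class meeting $E(H-a-b)$, so that essentially all confined classes are stars at $a$ that merge with $C_{c(e)}$ — and then pushing through the short case analysis on the cross classes and isolated edges so that the bookkeeping lands on exactly $\chi(G)$ rather than $\chi(G)+O(1)$. That calibration is the step I expect to be the crux.
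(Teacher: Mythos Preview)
Your proposal is an honest outline, not a proof: you correctly isolate the framework ($\ST$-partitions, confined classes, the identity $|c(N[e])|=m-\mu_c(e)$) and you correctly identify that the crude compression of $\delta_H(a)\cup\delta_H(b)$ into two stars only gives $\chi(G)\leq\psi(G)+1$ (in fact $+1$, not $+2$: the class of $e$ is itself confined, so you kill $\mu_c(e)+1$ classes and add two). But you never close that last unit; you only speculate that the extremal choice of $e$ might let you absorb the stray classes, and you give no argument that it does. I do not see how minimising $\mu_c$ over $e$ yields the structural control you want (``every edge at $b$ other than $\{a,b\}$ already belongs to a class meeting $E(H-a-b)$''), and I do not believe it does in general.

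The deeper issue is that you are aiming at the wrong contradiction. You try to recolour so as to drop \emph{below $\chi(G)$}, which is of course impossible; that is why your bookkeeping is stuck one unit short. The paper instead takes, among all colourings realising $\psi(G)$, one using the \emph{fewest} colours $t$, and then aims to drop below $t$ while \emph{remaining} a $\psi(G)$-realising colouring. For this it suffices to exhibit re-partitioning moves that strictly reduce the number of parts and, crucially, do not increase $|c(N[f])|$ for \emph{any} edge $f$ of $H$. Two such moves do all the work: merging two stars with the same centre (trivially harmless for every $f$), and the following less obvious one. If $e=xy$ lies in a star $S$ centred at $x$ and there are at least two ``cross'' two-edge stars with leaf set $\{x,y\}$, replace $S$ together with all these cross stars by the two stars $S_x$ (centre $x$) and $S_y$ (centre $y$) formed from their edges; one checks edge by edge that for every $f$ the number of parts having an edge disjoint from $f$ does not go up. Minimality of $t$ then forces: at most one star centred at $x$, at most one centred at $y$, and at most one cross star at $\{x,y\}$; from this one reads off $\mu_c(e)\leq 1$ for a well-chosen $e$, hence $|c(N[e])|\geq t-1\geq\chi(G)$. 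The point you are missing is precisely this uniform ``does not increase for any $f$'' property of the moves --- it is what lets you contradict minimality of $t$ rather than of $\chi(G)$, and it dissolves the off-by-one obstacle you ran into.
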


This theorem generalizes a result by G\'abor Simonyi and G\'abor Tardos~\cite[Proposition 4]{SiTa06}, which is the special case when $G$ is the Schrijver graph $\SG(n,2)$, i.e., when the graph of which we take the line graph is the complement of a cycle; see Section~\ref{sec:struct} for a formal definition of Schrijver graphs.

Actually, in any proper coloring of the complement of a line graph, the number of colors in the closed neighborhood of {\em every} vertex is high: it is at least $\chi(G)-1$. 
Section~\ref{sec:loc} provides a proof of this somehow surprising result together with a proof of Theorem~\ref{thm:loc}.

\subsection*{Vertex cover number and colorability defect}
The following inequalities hold for any graph $G$ that is the complement of the line graph of a graph $H$:
\begin{equation}\label{eq:ineq}
\cd_2(H)\leq\chi(G)\leq\tau(H).
\end{equation}
In this relation, $\tau(H)$ is the vertex cover number of $H$, and $\cd_2(H)$ is its {\em $2$-colorability defect}, defined as the minimum number of vertices to remove so that the graph induced by the remaining vertices is bipartite. The upper bound is immediate: color each edge of $H$ with an incident vertex of the vertex cover. The lower bound is a consequence of a famous theorem by Vladimir Dol'nikov~\cite{Do88} we present now. (Actually, in the special case studied here, the inequality can also be proved directly~\cite[Section 6.2]{AHM17}.)

From a hypergraph $\HH=(V,E)$, we build a graph $\KG(\HH)$ as follows: its vertices are the edges of $\HH$; its edges connect the vertices corresponding to disjoint edges in $\HH$. In other words, $\KG(\HH)$ is precisely the complement of the line graph of $\HH$. Do'lnikov's theorem is
\begin{equation}\label{eq:dolni}\chi(\KG(\HH))\geq\cd_2(\HH),
\end{equation}
where $\cd_2(\HH)$ is here more generally defined as the minimum number of vertices to remove so that the hypergraph induced by the remaining vertices is $2$-colorable:
$$\cd_2(\HH)=\min\big\{|X|\colon\left(V\setminus X,\{e\in E\colon e\cap X\}=\varnothing\right)\text{ is $2$-colorable}\big\}.$$ (Being {\em $2$-colorable} for a hypergraph means that we can color its vertices with two colors so that no edge is monochromatic.) 

Our two other main results deal with the case of equalities in \eqref{eq:ineq}. The argument above proving the upper bound actually shows that the chromatic number of $G$ is $\tau(H)$ when $H$ is triangle-free. We give a more general sufficient condition for the equality to hold. %The {\em co-butterfly} graph is the union of a vertex and a cycle of length $4$.
A {\em co-claw} is an isolated vertex together with a triangle. A {\em butterfly} is a union of two triangles sharing exactly one vertex.

\begin{theorem}\label{thm:struct}
Let $H$ be a graph that is not complete, has no induced co-claw, and has no induced butterfly. If $G$ is the complement of the line graph of $H$, then we have $\chi(G)=\tau(H)$.
\end{theorem}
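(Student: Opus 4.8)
The plan is to exploit the combinatorial description of proper colorings of $G=\KG(H)$. A color class of such a coloring is a set of edges of $H$ that are pairwise intersecting, and a classical elementary fact says that any such family is either a \emph{star} --- a set of edges all sharing a common vertex --- or the edge set of a triangle of $H$. Thus a proper coloring of $G$ with $k$ colors amounts to a partition of $E(H)$ into $a$ stars and $b$ triangles with $a+b=k$, and $\chi(G)$ is the minimum possible value of $a+b$. Since the inequality $\chi(G)\le\tau(H)$ is already known, it suffices to prove that from any partition of $E(H)$ into $a$ stars and $b$ triangles one can extract a vertex cover of $H$ of size at most $a+b$; this yields $\tau(H)\le\chi(G)$ and hence equality.

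I would first dispose of the easy cases. If $H$ is triangle-free, then $b=0$ and the centers of the $a$ stars already form a vertex cover of size $a$. If $H$ contains a triangle, the absence of an induced co-claw forces $H$ to be connected and, more usefully, forces every triangle of $H$ to be a dominating set: a vertex with no neighbor in a given triangle would span an induced co-claw with that triangle. From now on I assume $b\ge 1$ and keep in mind that every triangle of $H$ --- in particular every triangle arising as a color class --- dominates $H$.

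The core of the argument is to build the vertex cover out of a fixed partition $E(H)=S_1\sqcup\cdots\sqcup S_a\sqcup T_1\sqcup\cdots\sqcup T_b$, where $S_i$ is a star with center $v_i$ and $T_j$ a triangle on a set $\{x_j,y_j,z_j\}$. Choosing all the $v_i$ together with two vertices of each $T_j$ gives a vertex cover of size $a+2b$, so the task is to save $b$ vertices by forcing chosen vertices to coincide. I would account for the savings through a matching: assign to each triangle class $T_j$ a \emph{distinct} star class that $T_j$ ``absorbs'', in the sense that for a suitable choice of two vertices of $T_j$ every edge of that star is incident to one of them --- this happens, for example, whenever the center of the star lies in $\{x_j,y_j,z_j\}$, and also when the star consists only of edges joining its center to $\{x_j,y_j,z_j\}$. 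Shared vertices between two triangle classes provide a second, overlapping source of savings, since such a vertex can be reused for both. The statement to establish is then that a Hall-type obstruction preventing one from collecting $b$ such savings forces $H$ to be complete or to contain an induced co-claw or an induced butterfly: a triangle class that is neither absorbed by a star nor meets another triangle class has all its incident edges (outside the class) lying in stars centered at their far endpoints, and confronting this with the domination property together with either a non-neighbor supplied by non-completeness or a second triangle class pins down one of the forbidden configurations.

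I expect this last step --- converting a ``deficient'' triangle class, or more generally a deficient set of triangle classes, into an induced co-claw or butterfly (or into completeness of $H$) --- to be the main obstacle. It is where the three hypotheses enter at identifiable places: non-completeness produces the extra vertex needed to close a co-claw, the no-induced-co-claw hypothesis (via the domination property) restricts where the edges incident to $x_j,y_j,z_j$ can be placed, and the no-induced-butterfly hypothesis governs how two triangle classes may share a single vertex. The delicate part is the bookkeeping: one must check that, across all overlap patterns of the triangle classes, the accumulated savings reach $b$, so that the cover we construct really has size at most $a+b$.
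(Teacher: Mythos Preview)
Your approach differs from the paper's in a way that makes your task harder than necessary. The paper does not attempt to extract a vertex cover of size $a+b$ from an \emph{arbitrary} $\ST$-partition. Instead it fixes an optimal $\ST$-partition (so $|\P|=\chi(G)$) and, among those, one with the \emph{fewest triangles}. A short preliminary lemma (Lemma~\ref{lem:min-tri}) then says that in such a partition no vertex of a triangle is simultaneously the center of a star, and no circuit avoids star-edges unless it is itself a triangle of $\P$. With these two facts the hypotheses bite immediately: two triangles of $\P$ sharing a vertex would be an induced butterfly (no star edge can run between their remaining vertices), and two vertex-disjoint triangles of $\P$ would be joined by no edge at all (any such edge would lie in a star centred on a triangle vertex, or in a third triangle, both excluded), giving an induced co-claw. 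Hence $\P$ has at most one triangle, and the two residual cases ($b=0$ and $b=1$) are trivial: in the first the star centres form a cover of size $|\P|$; in the second every vertex outside the unique triangle $T$ is a neighbour of $T$ (no co-claw) and therefore a star centre (Lemma~\ref{lem:min-tri}), so $|\P|=v(H)-2\geq\tau(H)$ since $H$ is not complete.

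By contrast, your plan commits to an arbitrary partition and then tries to recover $b$ ``savings'' through a Hall-type matching between triangle classes and absorbable stars, with an auxiliary mechanism for overlapping triangles. You correctly identify this bookkeeping as the crux, but it is not carried out, and it is not clear it goes through cleanly: with $b\geq 2$ you must simultaneously control how triangle classes overlap, how stars are centred relative to several triangles at once, and how the forbidden-subgraph hypotheses exclude a deficient set in the Hall sense---and the no-butterfly hypothesis only constrains \emph{induced} configurations, so extra edges between two triangle classes (which are necessarily present in other parts of $\P$) complicate the picture. None of this is needed once you pass to a minimum-triangle optimal partition, where the problem collapses to $b\leq 1$. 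I would recast your argument along the paper's line: choose $\P$ extremally, prove the two structural facts of Lemma~\ref{lem:min-tri}, and then the forbidden subgraphs eliminate all but the two easy cases.
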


The proof of this theorem, some consequences, and related results are given in Section~\ref{sec:struct}. One of these consequences is a characterization of all vertex-critical induced subgraphs of $\KG(K_n)$ (usually denoted by $\KG(n,2)$).

Hypergraphs $\HH$ for which Dol'nikov's inequality~\eqref{eq:dolni} is actually an equality enjoy nice properties, e.g., equality between circular and usual chromatic numbers of $\KG(\HH)$, as proved by Meysam Alishahi et al.~\cite{AHM17}, extending a breakthrough result by Peng-An Chen~\cite{Ch11}. In the paper by Alishahi et al., the question on the complexity of deciding whether \eqref{eq:dolni} is actually satisfied as an equality was raised. We prove that given a graph $H$ and the complement $G$ of its line graph, deciding $\chi(G)=\cd_2(H)$ is $\NP$-hard. We state the theorem in a more general and a bit weaker form:

\begin{theorem}\label{thm:complex}
Given a hypergraph $\HH$, deciding whether $\chi(\KG(\HH))$ and $\cd_2(\HH)$ are equal is $\NP$-hard.
\end{theorem}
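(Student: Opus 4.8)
The plan is to reduce from a known $\NP$-hard problem about chromatic numbers of Kneser-type graphs, using the freedom that hypergraphs afford. First I would recall that for a graph $H$, computing $\chi(\KG(H))$ is essentially computing the maximum number of pairwise disjoint... no — rather, $\chi(\KG(H))$ is the minimum number of parts in a partition of $E(H)$ into intersecting families (i.e.\ ``stars or triangles''), so it is closely tied to covering-type parameters; and $\cd_2(H)$ is the minimum number of vertices whose deletion makes $H$ bipartite, which is the vertex-deletion distance to bipartiteness. So the equality $\chi(\KG(H)) = \cd_2(H)$ forces two a priori unrelated quantities to coincide, sandwiched by \eqref{eq:ineq}; I would exploit that we can control the gap by suitable gadget constructions.

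My concrete approach: reduce from a problem where the answer is encoded in whether $\cd_2$ meets $\tau$ or falls strictly below it. A natural candidate is to start from $\NP$-hardness of computing (or deciding a threshold for) the vertex cover number $\tau(H)$, or of deciding $3$-colorability, and build a hypergraph $\HH$ so that $\cd_2(\HH)$ equals $\tau$-like quantity precisely when the instance is a ``yes'', while $\chi(\KG(\HH))$ is pinned to a value that matches only in that case. The key technical step is a \emph{padding/amplification gadget}: given $\HH_0$, attach disjoint ``booster'' edges or vertices so that (i) $\chi(\KG(\HH))$ is forced up to some target $t$ regardless, and (ii) $\cd_2(\HH)$ reaches $t$ iff $\cd_2(\HH_0)$ reaches its own target, with the gadget contributing a fixed, computable offset. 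Using hypergraphs rather than graphs is what makes this flexible: with edges of size $>2$ one can engineer $2$-colorability defect quite precisely, and one can make $\KG(\HH)$ have a prescribed chromatic number by including, say, a sunflower-free packing of edges that forces a clique of the right size in $\KG(\HH)$ (pairwise disjoint edges form a clique), combined with enough overlapping edges to cap $\chi(\KG(\HH))$ from above via $\tau$.

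More precisely, the cleanest route I foresee: take an arbitrary graph $F$ and consider the hypergraph $\HH$ on vertex set $V(F)$ whose edges are the edges of $F$ together with one big edge $V(F)$ itself (or a family of large edges). The big edge is disjoint from no other edge, so it only raises $\tau$ and changes $\cd_2$ in a predictable way, while the matching number $\nu(F)$ (a clique in $\KG(\HH)$) gives a lower bound on $\chi(\KG(\HH))$. Tuning $F$ to be a disjoint union of a hard instance with many isolated-ish structures, I would arrange that $\chi(\KG(\HH)) = \cd_2(\HH)$ holds iff $F$ has, say, a perfect matching / a vertex cover of a certain size — a decision that is $\NP$-hard. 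Alternatively, reduce directly from the $\NP$-hardness of deciding $\chi(G)=t$ for Kneser-type $G$ when $t$ is the Dol'nikov bound in a restricted family; since Theorem~\ref{thm:complex} only claims $\NP$-hardness (not membership in $\NP$), I have latitude in the certificate direction and only need the hardness reduction to go through.

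The main obstacle will be engineering a gadget that simultaneously controls \emph{both} $\chi(\KG(\HH))$ and $\cd_2(\HH)$ without one of them drifting: the two parameters respond to edits of $\HH$ in opposite or uncoordinated ways (adding edges tends to lower $\chi(\KG(\HH))$ by destroying disjointness but can raise $\cd_2$), so pinning the gap to exactly $0$ in the ``yes'' case and exactly $1$ in the ``no'' case requires a delicate accounting. I expect to need a lemma computing $\cd_2$ and $\chi(\KG(\cdot))$ exactly on the gadget and showing these compose additively (or with a controlled error) with the hard-instance part — essentially a ``disjoint union'' or ``wedge'' lemma for both parameters. Establishing that composition cleanly, and verifying that the upper bound $\chi(\KG(\HH))\le\tau(\HH)$ is tight in the relevant regime so that the sandwich \eqref{eq:ineq} actually collapses, is where the real work lies; the reduction's polynomial-time computability and correctness of the ``no''-case strict inequality then follow routinely.
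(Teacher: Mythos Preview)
Your meta-strategy---reduce from an $\NP$-hard problem via a gadget that controls both $\chi(\KG(\HH))$ and $\cd_2(\HH)$---is the right one, but none of the concrete gadgets you propose will work, and the missing idea is substantial. The ``big edge'' $V(F)$ is disjoint from no other edge, so it becomes an \emph{isolated vertex} in $\KG(\HH)$ and leaves $\chi(\KG(\HH))$ unchanged; it also barely affects $\cd_2$. A vertex-disjoint union $\HH_1\sqcup\HH_2$ makes $\KG(\HH_1\sqcup\HH_2)$ the graph join $\KG(\HH_1)*\KG(\HH_2)$, so $\chi$ is additive, and $\cd_2$ is trivially additive as well; hence the gap $\chi(\KG(\cdot))-\cd_2(\cdot)$ is preserved and you cannot make it collapse conditionally on the instance. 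Your fallback (``reduce from $\NP$-hardness of deciding $\chi(G)=t$ for Kneser-type $G$'') is circular: that is essentially the statement you are trying to prove.

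The paper's reduction uses the \emph{join} operation (not disjoint union): given a connected triangle-free graph $H$, form $H(k)=H*\,(T_1\sqcup\cdots\sqcup T_k)$, the join of $H$ with $k$ disjoint triangles. The point is that $\cd_2(H_1*H_2)=\min\big(\cd_2(H_1)+v(H_2),\ \tau(H_1)+\tau(H_2),\ v(H_1)+\cd_2(H_2)\big)$, a minimum of \emph{three} competing terms, so which term wins depends on the instance---this is exactly the conditional behaviour your additive gadgets cannot produce. A separate (and nontrivial) computation of $\chi(\KG(H(k)))$ via $\ST$-partitions gives $\chi(\KG(H(k)))=\min\{\tau(H)+3k,\ v(H)+k\}$, and comparing the two minima yields the clean equivalence $\chi(\KG(H(k)))=\cd_2(H(k))\Longleftrightarrow\alpha(H)\le k$. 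Since deciding $\alpha(H)\le k$ is $\NP$-hard already for triangle-free graphs, the theorem follows. The pieces you are missing are thus (i) the join-with-triangles gadget and the three-term formula for $\cd_2$ of a join, and (ii) the exact computation of $\chi(\KG(H(k)))$, which requires a careful case analysis of optimal $\ST$-partitions that your sketch does not attempt.
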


Whether this decision problem is $\NP$-complete, i.e., whether there is a polynomial certificate for the equality, is still open. We also have a similar complexity result for deciding equality between $\chi(G)$ and the upper bound in \eqref{eq:ineq}. These complexity results are proved and discussed in Section~\ref{sec:compl}.

\subsection*{Acknowledgments} We thank Meysam Alishahi and Mat\v{e}j Stehl\'ik for pointing out a few relevant complementary references.

\section{Preliminaries}\label{sec:prel}

\subsection{Notation and elementary properties} Let $H$ be a graph. We denote by $V(H)$ (resp. $E(H)$) the set of its vertices (resp. edges) and by $v(H)$ (resp. $e(H)$) the number of vertices (resp. edges). Given a vertex $x$ of $H$, we denote by $\delta_H(x)$ the set of edges incident to $x$. The independent number of $H$ is denoted by $\alpha(H)$, its clique number by $\omega(H)$, and its vertex cover number by $\tau(H)$. It is well-known that $\alpha(H)+\tau(H)=v(H)$. The $2$-colorability defect, defined in the introduction, is denoted by $\cd_2(H)$. We have the following relation, which is a direct consequence of the definition of the $2$-colorability defect:
\begin{equation}\label{eq:cd+alpha}
v(H)\leq\cd_2(H)+2\alpha(H).
\end{equation}
Denoting by $G$ the complement of the line graph of $H$, these relations involving the independence number of $H$ combined with \eqref{eq:ineq} lead to the following bounds on the chromatic number of $G$ (noted to hold by Paul Renteln~\cite[Theorem 2.1, Item (i)]{renteln2003chromatic} when $\alpha(H)\leq 2$). We state them just for sake of completeness and we will not refer explicitly to them elsewhere in the paper:
$$v(H)-2\alpha(H)\leq\chi(G)\leq v(H)-\alpha(H).$$

\subsection{Parallel edges}\label{subsec:parallel} Let $H$ be a graph, and let $H'$ be a graph obtained from $H$ by replacing an edge $e$ by parallel edges. Denote respectively by $G$ and $G'$ the complements of the line graphs of respectively $H$ and $H'$. Then $\chi(G)=\chi(G')$ since the replacement of $e$ by parallel edges corresponds in $G$ to replicating a vertex without adding edges between the copies. Therefore, even if all proofs are written while assuming that $H$ is a simple graph, many statements of this paper can be straightforwardly extended to the multigraph case as well. For instance, Theorem~\ref{thm:loc} remains true if applied on a multigraph, and in Theorem~\ref{thm:struct}, if given a multigraph, the condition simply needs to be applied on the underlying simple graph.

\subsection{\texorpdfstring{$\ST$}{ST}-partitions} Let $G$ be the complement of the line graph of a graph $H$. Proper colorings of $G$ are exactly partitions of the edge set of $H$ into stars and triangles. A {\em star} is a tree with a vertex, the {\em center} of the star, connected to all other vertices. A single-edge tree is in particular a star. In this particular case, we will always assume that exactly one of the two vertices has been identified as the center, so as to be in a position of always speak of the center of a star without ambiguity. A {\em triangle} is a circuit of length $3$. We call such a partition of $H$ into stars and triangles an {\em $\ST$-partition}. Any element in an $\ST$-partition is called a {\em part}.

We state and prove a lemma that will be useful in the remaining of the paper.

\begin{lemma}\label{lem:min-tri}
Consider an $\ST$-partition $\P$ of a graph $H$ such that $\P$ has a minimum number of triangles. Then we have the following two properties simultaneously:
\begin{enumerate}[label=\textup{(\roman*)}]
\item \label{center} No vertex of a triangle in $\P$ is the center of a star in $\P$.
\item \label{circuit} If $C$ is a circuit of $H$ with no edge belonging to a star in $\P$, then $C$ is a triangle in $\P$.
\end{enumerate}
\end{lemma}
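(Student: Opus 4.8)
The plan is to prove both properties by a local exchange argument: assuming either one fails, I will modify $\P$ to produce another $\ST$-partition with strictly fewer triangles, contradicting minimality. For property \ref{center}, suppose some vertex $x$ is both a vertex of a triangle $T\in\P$ and the center of a star $S\in\P$. The idea is that $T$ contributes two edges incident to $x$, and we can ``absorb'' those two edges into $S$: let $T=xyz$, and form the new star $S'=S\cup\{xy,xz\}$ centered at $x$ (this is still a star since all the new edges are incident to $x$). Then $\P'=(\P\setminus\{T,S\})\cup\{S'\}$ is an $\ST$-partition of $H$ with one fewer triangle than $\P$, a contradiction. The only thing to check carefully here is that $S'$ is genuinely a star, i.e. that $xy$ and $xz$ are not already used elsewhere — but they belong to $T\in\P$, so this is fine.

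For property \ref{circuit}, let $C$ be a circuit of $H$ none of whose edges lies in a star of $\P$; then every edge of $C$ lies in some triangle of $\P$. I want to show $C$ itself is one of those triangles. First I would argue that $C$ must have length $3$: consider the triangles of $\P$ that meet $C$. If some triangle $T\in\P$ contains exactly one edge of $C$, say $T=uvw$ with $uv\in E(C)$ but the other two edges of $T$ not in $C$ — here I'd want to re-route. Actually the cleaner approach: pick an edge $e=uv$ of $C$ lying in a triangle $T=uvw\in\P$ with $w\notin V(C)$ (if no such edge exists, every triangle of $\P$ through an edge of $C$ has its third vertex on $C$, which already forces strong structure). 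In the generic case, replace $T$ by the star centered at $w$ consisting of $\{wu,wv\}$, and simultaneously this frees up $uv$; but now $uv$ is an uncovered edge, so I need to also handle it. The slick move is: form $\P'$ by removing $T$ and adding the two-edge star centered at $u$ given by $\{uv,uw\}$ — wait, $uw\in T$ so that works, and this is a star, reducing triangle count by one, contradiction. So in fact \emph{every} triangle of $\P$ meeting $C$ in exactly one edge can be eliminated, meaning each such triangle must meet $C$ in at least two edges; since a triangle has only $3$ edges and $C$ is a circuit, a triangle meeting $C$ in two edges shares a path of length $2$ with $C$, and its third edge closes a triangle — forcing $C$ to have length exactly $3$ and $T=C$.

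I expect the main obstacle to be property \ref{circuit}, specifically bookkeeping the case analysis so that the exchange always strictly decreases the number of triangles and always yields a valid $\ST$-partition (no edge covered twice, no edge left uncovered). The key invariant to track is: when we delete a triangle $T=uvw$, the three edges $uv,vw,wu$ become uncovered, and we must re-cover all of them using only stars (to avoid reintroducing a triangle). The trick that makes this work is that if one of $T$'s vertices, say $u$, is such that $uv$ and $uw$ can be attached to an existing star centered at $u$ — or, failing that, that we can peel off $\{uv,uw\}$ as a fresh single-center star and the remaining edge $vw$ was actually in $C$ and hence... no: the remaining edge $vw$ would still need covering. So the genuinely correct version is the one I used above: we only delete $T$ when we can simultaneously repackage \emph{all three} of its edges as stars, which is automatic if at least two of its edges go to a common vertex that is a valid star-center — and $T$ meeting $C$ in exactly one edge is precisely the configuration enabling $\{uv,uw\}$ to become a singleton-center star with $vw$ being an edge of $C$... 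I will need to restructure this as deleting $T$ and adding the star $\{uv,uw\}$ centered at $u$ together with the star $\{vw\}$ centered at $v$, which removes one triangle and adds no triangle. That is the clean exchange, and verifying it gives a legitimate $\ST$-partition is the crux.
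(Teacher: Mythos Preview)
Your argument for \ref{center} is the paper's idea, but you left the edge $yz$ uncovered: removing $T=xyz$ from $\P$ and absorbing only $xy,xz$ into $S$ does not give a partition of $E(H)$. Adding the one-edge star $\{yz\}$ fixes this and yields an $\ST$-partition with $|\P'|=|\P|$ and one fewer triangle, which is exactly what the paper does.

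For \ref{circuit} there are two genuine problems. First, your final exchange (delete $T=uvw$, add the stars $\{uv,uw\}$ and $\{vw\}$) works for \emph{every} triangle of $\P$, regardless of $C$; if that counted as a contradiction then the minimum number of triangles would always be $0$ and the lemma would be vacuous. The hypothesis has to be read as ``$\P$ minimises the number of triangles among $\ST$-partitions with at most $|\P|$ parts'' --- this is how it is applied throughout the paper, and the paper's proof explicitly preserves $|\P|$ --- whereas your exchange increases the number of parts by one. Second, your concluding step ``every triangle meeting $C$ meets it in at least two edges, hence $|C|=3$ and $T=C$'' is false: take $C=v_1v_2\cdots v_6$ with the triangles $v_1v_2v_3,\;v_3v_4v_5,\;v_5v_6v_1$ in $\P$; each meets $C$ in exactly two edges, yet $C$ has length $6$ and is not a part of $\P$. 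The paper handles precisely this configuration by first \emph{shortening} $C$ --- for each triangle with two edges on $C$, replace that length-$2$ subpath of $C$ by the triangle's third edge --- and then replacing all the (now one-edge-on-$C$) triangles by stars centred at the vertices of $C$, one star per triangle, so that $|\P|$ is preserved while the number of triangles strictly drops.
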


\begin{proof}
We prove~\ref{center}. Suppose that $T$ is a triangle with a vertex $x$ that is also the center of a star $S$. Then we can add the two edges of $T$ incident to $x$ to $S$, and create a new star with the remaining edge of $T$. This leads to a new $\ST$-partition with the same number of parts but fewer triangles. A contradiction.

We prove~\ref{circuit}. Suppose that $C$ is not a triangle in $\P$. For each triangle $T$ having two edges on $C$, we replace those two edges by the third edge of $T$. Then, we replace all triangles having an edge in common with $C$ by stars whose centers are the vertices of $C$. This does not change the number of parts in $\P$, while it decreases the number of triangles. A contradiction.
\end{proof}

Lemma~\ref{lem:min-tri} allows an elementary proof of Dol'nikov's inequality~\eqref{eq:dolni} when $\HH=H$, namely an elementary proof of the inequality $\chi(G)\geq\cd_2(H)$. We provide this proof to illustrate the relevance of this lemma. Consider an $\ST$-partition realizing $\chi(G)$, i.e., an $\ST$-partition with as few parts as possible. We suppose moreover that among all such $\ST$-partitions, we choose one with as few triangles as possible. Remove all centers of stars from $H$. Because of Lemma~\ref{lem:min-tri}, Item~\ref{center}, we get a graph made of edge-disjoint triangles (and possibly isolated vertices). Remove one vertex from each triangle (if possible). Because of Lemma~\ref{lem:min-tri}, Item~\ref{circuit}, we get a forest. In total, the number of removed vertices is at most the number of parts in the $\ST$-partition and we have turned $H$ into a bipartite graph.

\section{Colorful neighborhoods}\label{sec:loc}

\subsection{Another colorful result}

The following result is mentioned in the introduction.

\begin{theorem}\label{thm:color-neigh}
Let $G$ be the complement of a line graph. In every proper coloring of $G$, every vertex has at least $\chi(G)-1$ colors in its closed neighborhood.
\end{theorem}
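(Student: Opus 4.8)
The plan is to translate the statement into the language of $\ST$-partitions and argue by contradiction. Suppose $G$ is the complement of the line graph of a graph $H$, fix a proper coloring of $G$, i.e., an $\ST$-partition $\P$ of $H$ realizing some number of colors $k\geq\chi(G)$, and suppose for contradiction that some vertex $v$ of $G$ — that is, some edge $e=xy$ of $H$ — has at most $\chi(G)-2$ colors in its closed neighborhood. The closed neighborhood of $e$ in $G$ consists of $e$ itself together with all edges of $H$ disjoint from $e$; so the hypothesis says that the parts of $\P$ that are vertex-disjoint from $\{x,y\}$, together with the part containing $e$, use at most $\chi(G)-2$ colors in total. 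Equivalently, all but at most $\chi(G)-2$ parts of $\P$ meet $\{x,y\}$, apart from the one part containing $e$.

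From here the strategy is to build a \emph{cheaper} $\ST$-partition, contradicting the minimality $k\geq\chi(G)$. The parts of $\P$ meeting $\{x,y\}$ but not equal to the part of $e$ fall into a bounded number of shapes: a star centered at $x$ or at $y$, a star with $x$ or $y$ as a non-central leaf, or a triangle through $x$ or through $y$. First I would clean up $\P$ near $e$ using the move from Lemma~\ref{lem:min-tri}: if a leaf-occurrence of $x$ (resp. $y$) sits in some star $S$, detach that edge; collect all edges of $H$ incident to $x$ into one star centered at $x$, and likewise all edges incident to $y$ into one star centered at $y$ (the edge $e$ itself goes, say, to the star at $x$). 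This fuses \emph{all} the parts meeting $\{x,y\}$ — there were at most $\chi(G)-2$ of them besides $e$'s part, but now also $e$'s part is absorbed — into just two stars, one at $x$ and one at $y$, while the parts disjoint from $\{x,y\}$ are untouched and there are at most $\chi(G)-3$ of those (the neighborhood bound counted $e$'s part among the $\chi(G)-2$). The new $\ST$-partition thus has at most $(\chi(G)-3)+2=\chi(G)-1$ parts, contradicting $\chi(G)\leq k$.

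The step I expect to be the main obstacle is the bookkeeping in the paragraph above: one must be careful that consolidating the edges at $x$ and at $y$ into two stars does not require extra parts elsewhere. The only subtlety is that an edge formerly lying in a part $P$ that met $\{x,y\}$ gets removed from $P$, possibly leaving $P$ disconnected or empty; but since every such $P$ is being entirely dismantled (each of its edges is incident to $x$ or to $y$, or else $P$ would have been counted among the parts disjoint from $\{x,y\}$ — here one uses that a part is a star or a triangle, hence has at most two vertices in $\{x,y\}$ and any edge not touching $\{x,y\}$ in a star centered at $x$ is impossible, while a triangle through $x$ has exactly two such edges and its third edge must be handled), the accounting closes. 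I would isolate this as a short case analysis on the shape of each part meeting $\{x,y\}$, exactly parallel to the proof of Lemma~\ref{lem:min-tri}\ref{center}, and then conclude.
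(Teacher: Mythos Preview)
Your construction is the right one---it is exactly the move the paper makes: collect $\delta_H(x)$ into one star $S_x$ and $\delta_H(y)\setminus\delta_H(x)$ into one star $S_y$, and strip these edges from every other part. The gap is in the sentence ``every such $P$ is being entirely dismantled (each of its edges is incident to $x$ or to $y$, or else $P$ would have been counted among the parts disjoint from $\{x,y\}$).'' That dichotomy is false. A star centered at some $z\notin\{x,y\}$ with leaves $x$ and $w$ (where $w\notin\{x,y\}$) meets $\{x,y\}$ yet is \emph{not} entirely dismantled: after you remove $zx$, the edge $zw$ survives as a residual part. Likewise a triangle $xab$ with $a,b\notin\{x,y\}$ leaves behind the edge $ab$---you flag this (``its third edge must be handled'') but never handle it. So your count $(\chi(G)-3)+2$ ignores all these residual parts, and the contradiction does not follow as written.

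The repair is short, and it is essentially what the paper does. Every residual part contains an edge disjoint from $e$, so the color of the original part was already in the closed neighborhood of $v$. Hence if $A$ is the number of parts vertex-disjoint from $\{x,y\}$ and $B$ is the number of parts meeting $\{x,y\}$ that also contain an edge disjoint from $e$, the hypothesis actually gives $A+B+1\leq\chi(G)-2$, and the new partition has at most $A+B+2\leq\chi(G)-1$ parts---the contradiction you want. The paper avoids the contradiction framing altogether and argues directly: the recoloring only changes colors of edges touching $x$ or $y$ (non-neighbors of $v$), so the closed neighborhood of $v$ sees the same set of colors before and after; but after, only $S_y$ can fail to appear there, so at least $|\P'|-1\geq\chi(G)-1$ colors are present.
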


Theorem~\ref{thm:loc} provides a complementary information: it shows that in any proper coloring of a graph $G$ that is the complement of a line graph, there is actually always a vertex with at least $\chi(G)$ colors in its closed neighborhood.

\begin{proof}[Proof of Theorem~\ref{thm:color-neigh}]
Consider a proper coloring of $G$. Let $H$ be the graph such that $G$ is the complement of its line graph, and let $\P$ be the $\ST$-partition of $H$ corresponding to the proper coloring of $G$.

Choose any vertex $v$ of $G$, and denote by $e=xy$ the corresponding edge in $H$. Denote by $S_x$ the star with $x$ as a center and with $\delta_H(x)$ as an edge set. Denote by $S_y$ the star with $y$ as a center and with $\delta_H(y)\setminus\delta_H(x)$ as an edge set. Add to $\P$ the star $S_x$ and also the star $S_y$ (if this latter has at least one edge). Remove from each part in $\P$ the edges contained in $\delta_H(x)\cup \delta_H(y)$, and keep in $\P$ those that are nonempty after this removal. We get a new $\ST$-partition $\P'$. It corresponds to a new proper coloring of $G$. The vertices whose color has changed are vertices that are not adjacent to $v$. Thus, the number of colors in the closed neighborhood of $v$ has not changed. In this new coloring, the only color that is not in the closed neighborhood of $v$, if it exists, is the one corresponding to the star $S_y$. Therefore, the number of colors in the closed neighborhood of $v$ is at least $\chi(G)-1$, in both the new and the original coloring.
\end{proof}

\subsection{Rainbow paths}

In a properly colored graph, a rainbow path is a path whose vertices have pairwise distinct colors. A conjecture by Saieed Akbari et al.~\cite{akbari2016colorful}, originally proposed in a weaker form by Chiang Lin~\cite{lin2007simple}, states that for every connected graph $G$ with $G\neq C_7$, there exists a proper coloring with $\chi(G)$ colors such that every vertex is the endpoint of a rainbow path with $\chi(G)$ vertices. Alishahi et al.~\cite[Corollary 1]{alishahi2011rainbow} proved the existence of a proper coloring with $\chi(G)$ colors such that every vertex is the endpoint of a rainbow path with $\chi(G)-1$ vertices. Theorem~\ref{thm:color-neigh} has the following immediate corollary, which shows that complements of line graphs enjoy stronger properties regarding the existence of rainbow paths.

\begin{corollary}\label{cor:rainbow}
Let $G$ be the complement of a line graph. In every proper coloring of $G$, every vertex of $G$ is the endpoint of a rainbow path with $\chi(G)-1$ vertices.
\end{corollary}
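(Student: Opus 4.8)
The plan is to deduce Corollary~\ref{cor:rainbow} directly from Theorem~\ref{thm:color-neigh} by exhibiting, for each vertex, a rainbow path of the required length starting at that vertex. Fix a proper coloring of $G$ and a vertex $v$. By Theorem~\ref{thm:color-neigh}, the closed neighborhood $N[v]$ contains at least $\chi(G)-1$ distinct colors. One of these colors is $c(v)$ itself; pick one vertex of each of the remaining (at least) $\chi(G)-2$ colors among the neighbors of $v$. Together with $v$ this gives a set $W$ of at least $\chi(G)-1$ vertices, pairwise distinctly colored, all adjacent to $v$ except $v$ itself. The only thing left is to arrange $W$ into a path. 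Since $G$ is the complement of a line graph, its vertices correspond to the edges of a graph $H$, and two vertices of $G$ are adjacent exactly when the corresponding edges of $H$ are disjoint; in particular $v$ corresponds to some edge $e=xy$, and every vertex of $W\setminus\{v\}$ corresponds to an edge of $H$ avoiding both $x$ and $y$.

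The key observation is that any set of pairwise distinctly colored vertices that are all adjacent to a common vertex $v$ can be ordered into a rainbow path. Concretely, list the vertices of $W\setminus\{v\}$ as $w_1,\dots,w_k$ with $k\ge\chi(G)-2$; I claim $w_1,\dots,w_k,v$ traverses a path in $G$ only if consecutive $w_i$'s happen to be adjacent, which need not hold. So instead I would build the path greedily using the common neighbor $v$ as a hub is not enough either. The clean fix: observe that among edges of $H$ disjoint from $e$, the distinctly colored representatives we picked may themselves fail to be pairwise disjoint, so a direct path through all of $W$ is not automatic. The right move is to note that we only need \emph{a} rainbow path on $\chi(G)-1$ vertices ending at $v$, not one using a prescribed vertex set. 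I would therefore argue as follows: apply Theorem~\ref{thm:color-neigh} to get many colors in $N[v]$, and among the neighbors realizing these colors, pick them to correspond to a matching in $H$ avoiding $x$ and $y$.

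Here is the argument in order. Let $\P$ be the $\ST$-partition of $H$ corresponding to the coloring, $e=xy$ the edge for $v$. As in the proof of Theorem~\ref{thm:color-neigh}, after modifying $\P$ near $x$ and $y$ we may assume $\delta_H(x)$ forms one star $S_x$ and $\delta_H(y)\setminus\delta_H(x)$ another part, so that every color other than the one of $S_y$ appears on some edge of $H$ disjoint from $e$; there are at least $\chi(G)-1$ such colors, and one of them is $c(v)$ (the color of $S_x$). Now greedily select edges $f_1,f_2,\dots$: having chosen pairwise disjoint edges $f_1,\dots,f_i$ all disjoint from $e$ and from each other and carrying distinct colors, if fewer than $\chi(G)-2$ colors (other than $c(v)$ and $c(S_y)$) are used so far, then some color not yet used appears on an edge $g$ disjoint from $e$; shrinking $g$ if it meets $\bigcup f_j$ — more carefully, pass to a vertex-disjoint sub-selection — continue. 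This produces a matching $f_1,\dots,f_m$ in $H$, disjoint from $e$, with $m\ge\chi(G)-2$ edges of pairwise distinct colors, all distinct from $c(v)$. Then in $G$ the vertices $f_1,\dots,f_m,v$ are pairwise adjacent except that consecutive $f_i,f_{i+1}$ are adjacent (disjoint edges), $f_m$ and $v$ are adjacent (since $f_m\cap e=\varnothing$), so $f_1f_2\cdots f_m v$ is a path on $m+1\ge\chi(G)-1$ vertices with all colors distinct: a rainbow path ending at $v$.

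The main obstacle is making the greedy matching selection legitimate: when a new color is forced to appear on \emph{some} edge disjoint from $e$, that edge may intersect previously chosen $f_j$'s, and shrinking it is not an option for edges. This is resolved by working in the modified $\ST$-partition $\P'$ from the proof of Theorem~\ref{thm:color-neigh}: there, outside $\delta_H(x)\cup\delta_H(y)$ the parts are untouched, and at each step one simply needs a single edge of the next color disjoint from $e$ and from the vertices spanned so far — a straightforward counting shows such an edge exists as long as we have taken fewer than $\chi(G)-2$ colors, because otherwise those colors could be removed and $S_x$, $S_y$, and the spanned region recolored with too few colors, contradicting $\chi(G)$; I expect this counting to be the one place requiring care, and it mirrors the recoloring trick already used to prove Theorem~\ref{thm:color-neigh}.
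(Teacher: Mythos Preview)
Your approach has a fatal flaw: what you are actually trying to build is a rainbow \emph{clique} of size $\chi(G)-1$ through $v$ (a matching $\{e,f_1,\dots,f_m\}$ of size $\chi(G)-1$ in $H$, whose vertices in $G$ are then pairwise adjacent). Such a clique need not exist. Take $G=\KG(7,2)$, so $H=K_7$ and $\chi(G)=5$. The largest matching in $K_7$ has three edges, hence $\omega(G)=3$, and no rainbow clique of size $\chi(G)-1=4$ exists in any coloring. Your greedy matching construction must therefore fail around $i=3$, and no recoloring or counting argument can rescue it: the target object simply does not exist.

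The paper calls the corollary ``immediate,'' and the intended argument is a one-line greedy \emph{path} extension, applying Theorem~\ref{thm:color-neigh} repeatedly rather than once. Set $v_1=v$. Having built a rainbow path $v_1,\dots,v_i$ with $i<\chi(G)-1$, apply Theorem~\ref{thm:color-neigh} to $v_i$: the set $N[v_i]$ carries at least $\chi(G)-1>i$ colors, so some color there is not among $c(v_1),\dots,c(v_i)$; a vertex realizing it is a neighbor $v_{i+1}$ of $v_i$, distinct from all earlier $v_j$ since its color is new. Iterate until $i=\chi(G)-1$. No inspection of $H$, no modification of the $\ST$-partition, and no matching is needed.
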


\subsection{Proof of Theorem~\ref{thm:loc}}

We start with two lemmas. The first one does not need any proof.

\begin{lemma}\label{lem:stars}
Let $\P$ be an $\ST$-partition of a graph $H$. Let $S$ and $S'$ be two stars in $\P$ with the same center. Define $\P'$ as the $\ST$-partition obtained from $\P$ by merging $S$ and $S'$. The number of parts having an edge disjoint from a given edge $e$ does not increase when passing from $\P$ to $\P'$.
\end{lemma}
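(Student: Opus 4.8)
The plan is to exploit the fact that merging $S$ and $S'$ is an entirely local modification: it replaces these two parts by the single part $S\cup S'$ and leaves every other part of $\P$ unchanged. So I would only need to check two things: that $S\cup S'$ is a legitimate part of an $\ST$-partition, and that the two parts $S,S'$ of $\P$ together contribute to the counted quantity at least as much as the single part $S\cup S'$ contributes in $\P'$.

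For the first point, write $c$ for the common center of $S$ and $S'$. Since $\P$ partitions $E(H)$, the stars $S$ and $S'$ are edge-disjoint, so $S\cup S'$ is again a star centered at $c$, whose leaves form the union of the leaf sets of $S$ and $S'$; hence $\P'$ is indeed an $\ST$-partition.

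For the second point, attach to each part $P$ the indicator $f(P)\in\{0,1\}$ that equals $1$ precisely when $P$ contains an edge vertex-disjoint from $e$ (equivalently, when $P$ has an edge adjacent in $G$ to the vertex corresponding to $e$). The quantity in the statement is $\sum_P f(P)$ over the parts of the relevant partition, and since $\P$ and $\P'$ coincide away from $\{S,S'\}$ on the one side and $\{S\cup S'\}$ on the other, it suffices to compare $f(S)+f(S')$ with $f(S\cup S')$. Every edge of $S\cup S'$ lies in $S$ or in $S'$, so $f(S\cup S')=\max\{f(S),f(S')\}\leq f(S)+f(S')$, which gives the claim (and in fact shows the count either stays the same or drops by exactly one).

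I do not expect any genuine obstacle here: the argument is pure bookkeeping, which is presumably why the authors declared it needs no proof. The only points worth making explicit are that the union of two co-centered edge-disjoint stars is again a star (so $\P'$ is well-defined) and that ``disjoint'' throughout refers to vertex-disjointness of edges of $H$, i.e. to adjacency in $G$.
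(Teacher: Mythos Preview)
Your argument is correct and is exactly the routine bookkeeping the authors had in mind; they explicitly state that this lemma ``does not need any proof.'' Your write-up simply makes explicit the two obvious points---that the merged star is again a star, and that the contribution of $\{S,S'\}$ to the count dominates that of $\{S\cup S'\}$---so there is nothing to compare.
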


%\begin{proof}
%If a part is disjoint from $e$ in $\P'$, it correspond to a part disjoint from $e$ in $\P$.
%\end{proof}

\begin{lemma}\label{lem:2star}
Let $\P$ be an $\ST$-partition of a graph $H$. Consider an edge $e=xy$ of $H$ belonging to a star $S$ in $\P$, whose center is $x$. Denote by $\A$ the collection of stars with exactly two edges  and whose vertices distinct from the center are precisely $x$ and $y$. Assume that $\A$ is nonempty.

Define two stars $S_x$ and $S_y$ as follows. The star $S_x$ has $x$ as a center, and its edges are those in $E(S)\cup\left(\bigcup_{S'\in\A}(E(S')\cap\delta_H(x))\right)$. The star $S_y$ has $y$ as a center, and its edges are those in $\bigcup_{S'\in\A}(E(S')\cap\delta_H(y))$. Define then $\P'$ as a new $\ST$-partition obtained by removing from $\P$ the star $S$ as well as all parts in $\A$ and adding to it the stars $S_x$ and $S_y$.

Let $f$ be any edge of $H$. The number of parts having an edge disjoint from $f$ does not increase when passing from $\P$ to $\P'$.
\end{lemma}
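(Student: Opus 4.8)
The plan is to fix an arbitrary edge $f$ of $H$ and to compare the two $\ST$-partitions part by part. For a part $P$, write $d(P)\in\{0,1\}$ for the indicator that $P$ contains an edge sharing no endpoint with $f$, so that the quantity in the statement equals $\sum_{P\in\P}d(P)$ for $\P$ and $\sum_{P\in\P'}d(P)$ for $\P'$; the goal is $\sum_{P\in\P'}d(P)\leq\sum_{P\in\P}d(P)$. First I would unwind the combinatorial setup. Each part $S'\in\A$ has a center $z_{S'}\notin\{x,y\}$ and edge set $\{z_{S'}x,\,z_{S'}y\}$; moreover the centers $z_{S'}$ are pairwise distinct and $z_{S'}x\notin E(S)$, since otherwise two parts of $\P$ would share an edge. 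Consequently
$$E(S_x)=E(S)\;\sqcup\;\{z_{S'}x : S'\in\A\},\qquad E(S_y)=\{z_{S'}y : S'\in\A\}.$$
In particular $S_x$ and $S_y$ are nonempty stars (sets of edges through $x$, resp.\ $y$; nonempty because $\A\neq\varnothing$), and $E(S_x)\cup E(S_y)=E(S)\cup\bigcup_{S'\in\A}E(S')$, so $\P'$ is indeed an $\ST$-partition, and $\P$ and $\P'$ agree on every part except that $\{S\}\cup\A$ is replaced by $\{S_x,S_y\}$. Hence it suffices to establish the local inequality
$$d(S_x)+d(S_y)\;\leq\;d(S)+\sum_{S'\in\A}d(S').$$

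Next I would record three facts, each read directly off the displayed edge sets. (a) If $d(S_y)=1$, a witnessing edge has the form $z_{S'}y$, which also lies in $E(S')$; hence $d(S')=1$ and $\sum_{S'\in\A}d(S')\geq1$. (b) If $d(S_x)=1$, a witnessing edge is either an edge of $S$, giving $d(S)=1$, or has the form $z_{S'}x\in E(S')$, again giving $\sum_{S'\in\A}d(S')\geq1$. (c) One cannot have $d(S)=0$ while $d(S_x)=d(S_y)=1$: indeed $d(S)=0$ forces $f$ to meet $e=xy\in E(S)$, i.e.\ $x\in f$ or $y\in f$; but then $d(S_x)=1$ forces, as in (b), a disjoint edge $z_{S'}x$, so $x\notin f$ and therefore $y\in f$, while $d(S_y)=1$ forces a disjoint edge $z_{S''}y$, so $y\notin f$ --- a contradiction.

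It then remains to split on the value of $d(S_x)+d(S_y)\in\{0,1,2\}$. If it is $0$, the local inequality is trivial. If it is $1$, then (a) or (b) gives $d(S)+\sum_{S'\in\A}d(S')\geq1$. If it is $2$, then (c) forces $d(S)=1$ and (a) forces $\sum_{S'\in\A}d(S')\geq1$, so the right-hand side is at least $2$. Adding the local inequality to the (equal) contributions of the parts common to $\P$ and $\P'$ yields $\sum_{P\in\P'}d(P)\leq\sum_{P\in\P}d(P)$.

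The argument is short once the bookkeeping above is in place, so the only genuine obstacle is fact (c): ruling out $d(S)=0$, $d(S_x)=1$, $d(S_y)=1$ simultaneously. This is exactly the step where the hypotheses are used --- that $e=xy$ belongs to $S$ and that every part of $\A$ is centered outside $\{x,y\}$ --- and dropping it would make the statement false, since the two new parts could each acquire an edge disjoint from $f$ while the single two-edge star they arose from contributed only one. Everything else is the routine check that $\P'$ is a legitimate $\ST$-partition together with the translation of disjointness from $f$ into conditions on the endpoints of $f$.
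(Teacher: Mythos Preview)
Your proof is correct and follows essentially the same approach as the paper: both reduce to comparing the contributions of $\{S\}\cup\A$ in $\P$ against $\{S_x,S_y\}$ in $\P'$, with the only nontrivial case being when both $S_x$ and $S_y$ contain an edge disjoint from $f$. Your organization via the indicator $d(\cdot)$ and the key observation (c) is a clean repackaging of the paper's case analysis on the location of $f$; in particular, your fact (c)---using $e=xy\in E(S)$ to force $d(S)=1$ whenever $d(S_x)=d(S_y)=1$---corresponds exactly to the paper's final sub-case where it argues that if the two witnessing edges lie in the same $S'\in\A$ then $e$ is disjoint from $f$.
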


\begin{proof}
Suppose first that $f$ belongs to $S_x$. The only part in $\P'$ having possibly an edge $f'$ disjoint from $f$, in addition to those parts already in $\P$, is $S_y$. Since, in $\P$, such an edge $f'$ belongs to a star in $\A$ -- which is a part that is not present in $\P'$ --  the number of parts having an edge disjoint from $f$ does not increase when passing from $\P$ to $\P'$.

Second, suppose that $f$ belongs to $S_y$. The only part in $\P'$ having possibly an edge $f'$ disjoint from $f$, in addition to those parts already in $\P$, is $S_x$. Since, in $\P$, such an edge $f'$ belongs to $S$ or a star in $\A$ -- which are parts that are not present in $\P'$ --  the number of parts having an edge disjoint from $f$ does not increase when passing from $\P$ to $\P'$.

Last, suppose that $f$ belongs neither to $S_x$, nor to $S_y$. If among $S_x$ and $S_y$, only one of them has an edge $f'$ disjoint from $f$, then the number of parts having an edge disjoint from $f$ does not increase when passing from $\P$ to $\P'$: indeed, this edge $f'$ belongs to $S$ or a star in $\A$, which are parts that are not present in $\P'$. The last case to consider is when both $S_x$ and $S_y$ have an edge disjoint from $f$. If these two edges belong to a same part in $\P$, then necessarily this part is a two-edge star in $\A$, and the edge $e$ is then also disjoint from $f$, showing that $S$ has an edge disjoint from $f$ as well. In any case, the number of parts having an edge disjoint from $f$ does not increase when passing from $\P$ to $\P'$.
\end{proof}

\begin{proof}[Proof of Theorem~\ref{thm:loc}]
Let $H$ be the graph such that $G$ is the complement of its line graph. Consider a proper coloring of $G$ realizing $\psi(G)$ (by this, we mean a proper coloring with at most $\psi(G)$ colors in any closed neighborhood of a vertex). Denote by $\P$ the $\ST$-partition of $H$ corresponding to this proper coloring, and by $t$ the number of colors in this proper coloring. We choose the coloring so that $t$ is as small as possible. We deal first with two easy cases.

Consider the case where there are only triangles in $\P$. Pick an edge $e$ in $H$. Apart from the triangle in $\P$ containing $e$, every part has an edge disjoint from $e$. Therefore, the vertex $v$ corresponding to $e$ in $G$ has all $t$ colors in its closed neighborhood. Thus $\psi(G)=t\geq\chi(G)$. Since the local chromatic number is always upper bounded by the usual chromatic number, we get $\psi(G)=\chi(G)$.

Consider the case where $t=\chi(G)$. Then $\psi(G)=\chi(G)$ since in a proper coloring with a minimum number of colors, there is always a vertex with all colors in its closed neighborhood.

The remaining of the proof deals with the case where there is at least one star in $\P$ and $t-1\geq\chi(G)$. Denote by $x$ the center of a star, and let $e=xy$ be any of its edges. Let $\A$ be the collections of all stars with exactly two edges and whose vertices distinct from the center are precisely $x$ and $y$. If $\A$ is of cardinality one, its star is the only part from $\P$ having no edge disjoint from $e$. Hence, $\psi(G)\geq t-1\geq\chi(G)$. So, suppose that $\A$ is of cardinality at least $2$. We cannot apply to $\P$ the transformation described in Lemma~\ref{lem:2star} for the edge $e$: indeed, Lemma~\ref{lem:2star} shows that we would get a new $\ST$-partition $\P'$ corresponding to a proper coloring of $G$ still realizing $\psi(G)$, but with less than $t$ colors; a contradiction with the minimality assumption on $t$. Similarly, if we could decrease the number of colors by merging stars with $x$ as a center (single-edge stars incident to $x$ are considered as having $x$ as their center) or stars with $y$ as a center (single-edge stars incident to $y$ are also considered as having $y$ as their center), Lemma~\ref{lem:stars} shows that we would get a proper coloring of $G$ still realizing $\psi(G)$, but with less than $t$ colors; a contradiction again.

So, apart from the star with $x$ as a center, and the star with $y$ as a center (if it exists), every part in $\P$ has an edge disjoint from $e$. Thus $\psi(G)\geq t-1\geq\chi(G)$. 
\end{proof}

\section{Case of tightness of the vertex cover number}\label{sec:struct}

\subsection{Consequences of Theorem~\ref{thm:struct}}

Let $n$ and $s$ be two positive integers. A subset of $S$ of $[n]$ is {\em $s$-stable} if for any $i$ and $j$ in $S$ we have $s\leq|i-j|\leq n-s$. In other words, $S$ is $s$-stable if it is a clique of the web $W^n_s$. (The {\em web $W^n_s$}, introduced by Leslie Trotter~\cite{trotter1975class}, is a graph with $[n]$ as vertex set and with an edge between $i$ and $j$ precisely when $s\leq|i-j|\leq n-s$.)

Let $k$ be a positive integer such that $n\geq ks$. The {\em $s$-stable Kneser graph} is the graph $\KG(\HH)$ when $\HH$ is the hypergraph with $[n]$ as vertex set and with the $s$-stable $k$-subsets of $[n]$ as edge set. It is denoted by $\KG(n,k)_{s\stab}$. When $s=1$, any $k$-subset is $1$-stable and we get the ``classical'' Kneser graph mentioned in the first paragraph of the introduction. It is also denoted by $\KG(n,k)$, and its chromatic number, established by Lov\'asz~\cite{Lo78}, is $n-2k+2$. Alexander Schrijver~\cite{Sch78} proved that the chromatic number of the $2$-stable Kneser graph $\KG(n,k)_{2\stab}$ -- also called {\em Schrijver graph} and denoted by $\SG(n,2)$ -- is $n-2k+2$ too. The second author~\cite{Me11} conjectured that more generally the chromatic number of $\KG(n,k)_{s\stab}$ is $n-sk+s$ when $s\geq 2$. This conjecture has been proved for even $s$ by Chen~\cite{chen2015multichromatic} and for $s\geq 4$ when $n$ is sufficiently large by Jakob Jonsson~\cite{jonsson2012chromatic}. The case $s=3$ is completely open. (The first author and J\'ozsef Oszt\'enyi~\cite{daneshpajouh2019neighborhood} have recently proved a lower bound off-by-one with respect to the conjectured one.)

The next result is thus in particular the first contribution for the $s=3$ case. It is obtained from Theorem~\ref{thm:struct} with $H=W^n_s$. Independently of this work, Ugo Giocanti has recently obtained an alternate proof of this result. It will be available in his master thesis report.

\begin{corollary}\label{cor:stable}
Let $n$ and $s$ be two positive integers such that $n\geq 2s$ and $s\geq 2$. We have $$\chi(\KG(n,2)_{s\stab})=n-s.$$
\end{corollary}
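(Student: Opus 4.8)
The plan is to apply Theorem~\ref{thm:struct} with $H = W^n_s$, the web graph, since by definition $\KG(n,2)_{s\stab} = \KG(\HH)$ where $\HH$ has edge set equal to the $s$-stable $2$-subsets of $[n]$, and those $2$-subsets are exactly the edges of $W^n_s$. So $\KG(n,2)_{s\stab}$ is precisely the complement of the line graph of $W^n_s$. To invoke Theorem~\ref{thm:struct} I must verify three hypotheses on $W^n_s$: that it is not complete, that it has no induced co-claw, and that it has no induced butterfly. Granting these, Theorem~\ref{thm:struct} gives $\chi(\KG(n,2)_{s\stab}) = \tau(W^n_s)$, and it then remains to compute $\tau(W^n_s) = n - s$, equivalently $\alpha(W^n_s) = s$ via the identity $\alpha + \tau = v$.

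First I would dispose of the completeness condition: $W^n_s$ is complete iff every pair $\{i,j\}$ satisfies $s \le |i-j| \le n-s$, which fails for $\{1,2\}$ as soon as $s \ge 2$, so $W^n_s$ is not complete under the hypothesis $s \ge 2$. (One should double-check small cases like $n = 2s$ to make sure the graph is nonempty and the statement is not vacuous.) Next I would compute $\alpha(W^n_s)$. An independent set in $W^n_s$ is a set of vertices pairwise \emph{not} joined, i.e. every two elements $i,j$ satisfy $|i-j| \le s-1$ or $|i-j| \ge n-s+1$; thinking of $[n]$ arranged on a cycle, this says the cyclic distance between any two chosen elements is at most $s-1$, so the chosen elements all lie within a cyclic arc of length $s-1$, hence at most $s$ of them. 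Conversely $\{1,2,\dots,s\}$ is independent. Thus $\alpha(W^n_s) = s$ and $\tau(W^n_s) = n - s$.

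The main work is checking the two forbidden induced subgraph conditions, and I expect the butterfly condition to be the real obstacle. For the co-claw: an induced co-claw would be a triangle $\{a,b,c\}$ together with an isolated vertex $d$ adjacent to none of $a,b,c$. By the independent-set analysis, $d$ together with any one of $a,b,c$ lies in a cyclic arc of length $s-1$; pushing this through, $d,a,b,c$ would all have to be within cyclic distance $s-1$ of each other, contradicting that $\{a,b,c\}$ is a triangle (which forces some pair among them to be at cyclic distance $\ge s$). For the butterfly: two triangles $T_1, T_2$ sharing exactly one vertex $v$, with no other edges between them. Writing $T_1 = \{v, a_1, b_1\}$ and $T_2 = \{v, a_2, b_2\}$, the non-adjacency of, say, $a_1$ and $a_2$ means they are within cyclic distance $s-1$, and similarly for the other cross-pairs; combining these constraints with the adjacency (cyclic distance $\ge s$) constraints inside each triangle should yield a contradiction. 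The careful bookkeeping here — tracking which pairs are forced close and which forced far on the cycle, and handling the w鋭-around — is where the argument needs genuine case analysis, and is the step I would expect to be the most delicate. Once both exclusions are established, Theorem~\ref{thm:struct} applies and $\chi(\KG(n,2)_{s\stab}) = \tau(W^n_s) = n - s$, completing the proof.
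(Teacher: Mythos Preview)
Your plan matches the paper's proof exactly: apply Theorem~\ref{thm:struct} to $H=W^n_s$, verify the three hypotheses, and use $\alpha(W^n_s)=s$ (which the paper simply cites) to get $\tau(W^n_s)=n-s$.

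Two remarks on the details. First, your co-claw argument contains a slip: from ``$d$ is within cyclic distance $s-1$ of each of $a,b,c$'' it does \emph{not} follow that $a,b,c,d$ are pairwise within cyclic distance $s-1$ of one another; two of $a,b,c$ could sit on opposite sides of $d$ and be as far as $2(s-1)$ apart. The conclusion (no induced co-claw) is correct, but the justification needs to be rerouted through the observation that three points in an arc of $2s-1$ consecutive vertices cannot form a triangle in $W^n_s$; the paper simply states the co-claw exclusion without proof. Second, the butterfly check is indeed the crux, but it is shorter than you anticipate: the paper orders the five butterfly vertices cyclically on the circle, notes there are (up to symmetry) only three possible cyclic arrangements, and shows that in each of them a fixed non-edge of the butterfly has both complementary arcs containing at least $s$ vertices, forcing that pair to be adjacent in $W^n_s$---a contradiction. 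So rather than tracking all four cross non-edges simultaneously, you only need to find, for each cyclic ordering, one non-edge that is forced to be long on both sides.
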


\begin{proof}
We will apply Theorem~\ref{thm:struct} to $W^n_s$. The graph $W^n_s$ is not complete and has no induced co-claw. Since $\alpha(W^n_s)=s$~\cite{VaVe06}, and thus $\tau(W^n_s)=n-s$, we will get the conclusion once we have proved that $W^n_s$ has no induced butterfly.

Suppose for a contradiction that $W^n_s$ has an induced butterfly. Denote by $a$, $b$, $c$, $d$, and $e$ its five vertices. There are three possibilities:
\begin{center}
\includegraphics[width=10cm]{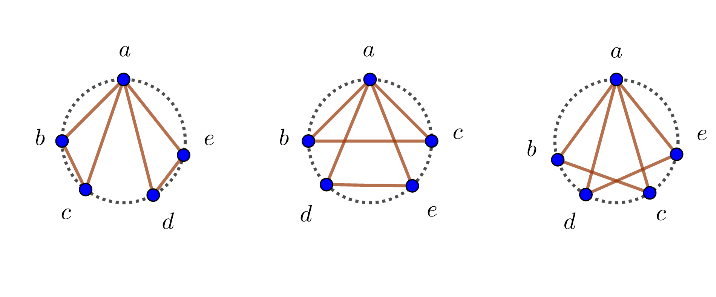}
\end{center}
The vertices are ordered on the circle in the clockwise way. We denote by $\wideparen{xy}$ the arc of the circle between the vertex $x$ and the vertex $y$, in the clockwise way. In each possibility, the arcs $\wideparen{be}$ and $\wideparen{eb}$ contain at least $s$ vertices. Hence, the edge $be$ must be present in $W_s^n$. A contradiction.
\end{proof}

As noted above, $\SG(n,k)$ is an induced subgraph of $\KG(n,k)$ with the same chromatic number. The next result characterizes all induced subgraphs of $\KG(n,2)$ with the same chromatic number, i.e., with chromatic number equal to $n-2$. In particular, it implies that the vertex-critical induced subgraphs of $\KG(n,2)$, with the same chromatic number as $\KG(n,2)$, are exactly those obtained with $\overline H$ being the vertex-disjoint union
\begin{itemize}
\item of cycles of length at least $5$ covering at least $v(H)-1$ vertices, or
\item of cycles of length at least $5$ and a path of length at most $3$, all together covering $V(H)$.
\end{itemize}
Schrijver proved that all $\SG(n,k)$ are vertex-critical. When $k=2$, this is the special case of our result when there is exactly one cycle in the collection. (The question of edge-criticality is probably trickier; see the recent work by Mat\v{e}j Stehl\'ik and Tom\'a\v{s} Kaiser~\cite{kaiser2020edge}, who describe a family of edge-critical spanning subgraphs of $\SG(n,2)$.)
 
The sufficiency condition in the statement below relies on Theorem~\ref{thm:struct}.

\begin{theorem}
Let $G$ be the complement of the line graph of a graph $H$. Then $\chi(G)=v(H)-2$ if and only if $\overline H$ is a collection of vertex-disjoint cycles and paths, in which each cycle is of length at least $5$.
\end{theorem}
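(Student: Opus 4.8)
The plan is to characterize exactly when $\chi(G)=v(H)-2$, using the inequalities from the introduction together with Theorem~\ref{thm:struct}. Recall that $v(H)-2\alpha(H)\le\chi(G)\le v(H)-\alpha(H)$, so if $\chi(G)=v(H)-2$ then $\alpha(H)\ge 2$, and the two directions of the equivalence will be handled separately.

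\medskip
\emph{Sufficiency.} Suppose $\overline H$ is a vertex-disjoint union of cycles and paths in which every cycle has length at least $5$. First I would observe that this forces $\alpha(H)=2$: a maximum independent set of $H$ is a maximum clique of $\overline H$, and a disjoint union of paths and cycles of length $\ge 5$ has clique number $2$ (each component has clique number $2$, being a path with at least one edge or a cycle with no chord, and the components are disjoint so cliques cannot cross). Hence $v(H)-2=v(H)-2\alpha(H)\le\chi(G)\le v(H)-\alpha(H)=v(H)-2$ as soon as we know the lower bound is attained; but we actually get it for free only if the bounds coincide, which they do not here. So instead I would apply Theorem~\ref{thm:struct} directly: I must check that $H$ is not complete, has no induced co-claw, and has no induced butterfly. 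Non-completeness is clear since $\alpha(H)=2>1$ (assuming $v(H)\ge 3$; the tiny cases $v(H)\le 4$ can be checked by hand, noting $v(H)-2=\chi(G)$ forces enough structure). An induced co-claw in $H$ is an induced co-co-claw in $\overline H$, i.e.\ an induced $\overline{K_3}\cup K_1$ — three pairwise non-adjacent vertices plus an isolated vertex (in the complement, "isolated" becomes "adjacent to all three"). Equivalently $\overline H$ would contain three independent vertices, contradicting that every component of $\overline H$ (a path or long cycle) has independence number... no — I need the complement statement carefully: an induced co-claw $K_1+K_3$ in $H$ means four vertices spanning a triangle plus an isolated vertex in $H$; in $\overline H$ this is an independent set of size $3$ together with a fourth vertex adjacent to none of... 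I would work this out precisely, the point being that a co-claw in $H$ forces a triangle in $H$, hence an independent set of size $3$ in... no, a triangle in $H$ is three mutually \emph{non}-adjacent vertices in $\overline H$, i.e.\ $\alpha(\overline H)\ge 3$? That is false for a single long cycle but true once $\overline H$ has $\ge 3$ components or a long enough component. The correct reading: a triangle in $H$ is an independent set of size $3$ in $\overline H$, which a disjoint union of paths/cycles-of-length-$\ge5$ certainly \emph{can} contain. So the co-claw must be excluded differently — using the extra isolated vertex. An isolated vertex $v$ of the co-claw in $H$ is, in $\overline H$, adjacent to all three triangle-vertices, impossible since in $\overline H$ every vertex has degree at most $2$. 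That is the clean argument. Similarly, an induced butterfly in $H$ contains a triangle and a vertex (the shared one) of $H$-degree $4$ within the five vertices, hence a vertex of $\overline H$-degree... the shared vertex is adjacent in $H$ to the other four, so in $\overline H$ it has degree $0$ among these five; meanwhile the two "wing" triangles give structure — actually the two outer pairs are non-adjacent in $H$ to... I would instead note the butterfly has two vertices of $H$-degree $2$ in the subgraph that are non-adjacent, and the shared vertex is adjacent to all: translating, $\overline H$ restricted to the five vertices is $2K_2$ plus an isolated vertex, but those two $K_2$'s plus the isolated vertex would have to sit inside paths/long cycles — the isolated-in-$\overline H$ vertex $c$ has two $H$-neighbours... hmm, $c$ has degree $0$ in $\overline H$ which is impossible unless $c$ is an isolated vertex of $\overline H$, but $\overline H$ has no isolated vertices (every component is a path with an edge or a cycle). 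Actually a path of length $0$? The statement allows paths, and a path could be a single vertex. So I would need to rule this out too, but then $c$ isolated in $\overline H$ means $c$ is universal in $H$, and the remaining four vertices must induce $2K_2$ in $H$ — but $2K_2$ in $H$ on four vertices means $\overline H$ restricted there is $C_4$ (or $2K_2$'s complement on $4$ vertices $=C_4$... complement of $2K_2$ is $C_4$), a $4$-cycle, which is excluded since cycles must have length $\ge 5$. Good — so sufficiency reduces to these degree/component bookkeeping arguments plus one invocation of Theorem~\ref{thm:struct}.

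\medskip
\emph{Necessity.} Conversely, suppose $\chi(G)=v(H)-2$. As noted, $\alpha(H)\ge 2$. I claim $\overline H$ has maximum degree $\le 2$ and no cycle of length $3$ or $4$; these together say $\overline H$ is a disjoint union of paths and cycles of length $\ge 5$. I would prove the contrapositive of each forbidden configuration: if $\overline H$ has a vertex of degree $\ge 3$, then $H$ contains an induced co-claw-or-similar that lets me build an $\ST$-partition of $H$ with only $v(H)-3$ parts, contradicting $\chi(G)=v(H)-2$. Concretely, a vertex of $\overline H$-degree $3$ gives, in $H$, a vertex $v$ non-adjacent to three vertices $a,b,c$; the edges not covered... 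I would instead argue via $\ST$-partitions directly: take a star partition, and the point is that if $H$ has "enough edges" we can pack a triangle or a large star saving two or more vertices. A cleaner route: use the already-proven chain and known tightness characterizations — I recall that $\chi(G)\ge v(H)-2$ with the upper side $v(H)-\alpha(H)\le v(H)-2$ would need $\alpha(H)\ge 2$; and $\chi(G)=v(H)-2 = \tau(H)+ (\alpha(H)-2)\cdot(\ldots)$, so when $\alpha(H)=2$ we get $\chi(G)=\tau(H)$ and Theorem~\ref{thm:struct}'s converse-ish content, while when $\alpha(H)\ge 3$ we would need $\chi(G)$ close to its \emph{lower} Dol'nikov-type bound. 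So I would split: (a) $\alpha(H)=2$, where $\tau(H)=v(H)-2$ and I must show $H$ is co-claw-free and butterfly-free forces the path/long-cycle structure — but actually here I need a \emph{converse} of sorts, namely if $H$ has an induced co-claw or butterfly then $\chi(G)<\tau(H)=v(H)-2$. A co-claw or butterfly in $H$ each lets one merge edges into a big star or a triangle, saving a color: I'd exhibit the explicit $\ST$-partition. With co-claw, the triangle becomes one part and its isolated-vertex-neighbour relationship lets another saving; with butterfly, the two triangles sharing a vertex can be re-packed. (b) $\alpha(H)\ge 3$: here $v(H)-2\ge v(H)-2\alpha(H)+1$, plenty of slack, and I'd show $\chi(G)\le v(H)-\alpha(H)\le v(H)-3$, a contradiction — so in fact $\alpha(H)\ge 3$ cannot occur, meaning $\overline H$ has clique number exactly $2$, i.e.\ $\overline H$ is triangle-free. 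Then "no $C_4$ in $\overline H$" and "max degree $\le 2$ in $\overline H$" are exactly the remaining obstructions, each ruled out by producing a cheaper $\ST$-partition of $H$.

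\medskip
\emph{Main obstacle.} The hard part will be the necessity direction, specifically showing that an induced co-claw or butterfly (equivalently, a high-degree or short-cycle structure in $\overline H$) \emph{strictly} drops $\chi(G)$ below $v(H)-2$. Theorem~\ref{thm:struct} gives one inclusion ("no bad subgraph $\Rightarrow$ equality"), but the converse — that a bad subgraph forces strict inequality — is not the literal contrapositive of Theorem~\ref{thm:struct} (that would only say equality \emph{may} fail), so I will need a direct $\ST$-partition construction: locate the co-claw/butterfly, greedily cover it with one triangle plus surrounding stars so that the number of parts is at most $v(H)-3$, then extend to the rest of $H$ without increasing the count. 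Getting this packing argument airtight — in particular checking that the extension to the remainder never costs back the saved part — is the delicate bookkeeping, and it is where Lemma~\ref{lem:min-tri} on triangle-minimal $\ST$-partitions should do the heavy lifting.
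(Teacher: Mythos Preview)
Your sufficiency direction is essentially the paper's: apply Theorem~\ref{thm:struct} after checking $\tau(H)=v(H)-2$. Two remarks. First, you miss a case: if every component of $\overline H$ is a single-vertex path, then $\overline H$ is edgeless, $H$ is complete, $\alpha(H)=1$, and Theorem~\ref{thm:struct} does not apply; here $G=\KG(n,2)$ and one simply quotes $\chi(G)=n-2$. Second, your co-claw and butterfly checks are correct in substance (degree $\le 2$ in $\overline H$ kills the co-claw; an induced butterfly in $H$ gives an induced $C_4$ in $\overline H$, impossible in paths and cycles of length $\ge 5$), but the write-up should be cleaned of the false starts.

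For necessity, your $\alpha(H)\ge 3$ argument is correct and in fact slicker than the paper's corresponding case. But the ``main obstacle'' you flag --- extending a local $\ST$-partition around the bad configuration to all of $H$, with Lemma~\ref{lem:min-tri} doing heavy lifting --- is a phantom difficulty, and Lemma~\ref{lem:min-tri} is not needed at all. The paper's key simplification is this: once $\alpha(H)=2$, if $\overline H$ is not a disjoint union of paths and long cycles then $\overline H$ contains $K_{1,3}$ or $C_4$ as a (not necessarily induced) subgraph; hence $H$ is a spanning subgraph of $H_1:=K_n\setminus E(K_{1,3})$ or $H_3:=K_n\setminus E(C_4)$. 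Since passing to a subgraph of $H$ only deletes vertices from $G$, it suffices to exhibit an $\ST$-partition of each $H_i$ with at most $n-3$ parts, and this is immediate: $H_1$ is covered by one triangle (on the three leaves of the deleted claw) plus $n-4$ stars; $H_3$ by two triangles sharing a fifth vertex (each containing one diagonal of the deleted $4$-cycle) plus $n-5$ stars. No extension bookkeeping is required, because you never work inside $H$ itself --- you work in the maximal graph $H_i$ and then restrict.
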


\begin{proof}
Suppose first that $\overline H$ is a collection of vertex-disjoint cycles and paths, in which each cycle is of length at least $5$. If $H$ is a complete graph, then we have $\chi(G)=v(H)-2$ (because then $G=\KG(n,2)$ with $n=v(H)$). If $H$ is not the complete graph, then it satisfies the condition of Theorem~\ref{thm:struct}. Since $\tau(H)=v(H)-\alpha(H)=v(H)-\omega(\overline H)=v(H)-2$, we get $\chi(G)=v(H)-2$, as desired.

We now prove the reverse implication. Let $n=v(H)$. The cases $n=3$ and $n=4$ are easily checked. So, suppose that $n\geq 5$. Denote by $H_1$ the graph obtained from $K_n$ by removing the edges of a claw $K_{1,3}$, by $H_2$ the graph obtained from $K_n$ by removing the edges of a triangle, and by $H_3$ the graph obtained from $K_n$ by removing the edges from a cycle of length $4$. Denote by respectively $G_1$, $G_2$, and $G_3$ the complements of the line graphs of $H_1$, $H_2$, and $H_3$. Suppose that $\overline H$ is not a collection of vertex-disjoint cycles and paths as in the statement. The graph $H$ is then a (non-necessarily induced) subgraph of $H_1$, $H_2$, or $H_3$. Hence, $\chi(G)\leq\max(\chi(G_1),\chi(G_2),\chi(G_3))$. The edges of $H_1$ can be covered by one triangle and $v(H)-4$ stars. The edges of $H_2$ can be covered by $v(H)-3$ stars. The edges of $H_3$ can be covered by two triangles and $v(H)-5$ stars. (The two triangles share a vertex, and each of them covers a distinct diagonal of the $4$-cycle.) We get $\chi(G_i)\leq v(H)-3$, and therefore $\chi(G)\leq v(H)-3$.
\end{proof}

\subsection{Proof of Theorem~\ref{thm:struct}}

\begin{proof}[Proof of Theorem~\ref{thm:struct}]
Let $\P$ be an optimal $\ST$-partition with a minimal number of triangles, i.e., we choose an $\ST$-partition of $H$ such that $|\P|=\chi(G)$, and among all $\ST$-partitions satisfying this equality, we choose one with as few triangles as possible. 

We first show that all triangles in $\P$ are vertex-disjoint. Suppose that there are two triangles $T$ and $T'$ sharing a vertex. Since we assume that $H$ is simple (see Section~\ref{subsec:parallel}), it means that $T$ and $T'$ have exactly one vertex in common. Call this vertex $x$. There is no edge between $V(T)\setminus\{x\}$ and $V(T')\setminus\{x\}$ because of Lemma~\ref{lem:min-tri}. But then $T$ and $T'$ form together a butterfly in $H$. A contradiction.

We show now that there is at most one triangle in $\P$. Suppose for a contradiction that there are two distinct triangles $T$ and $T'$. They are vertex-disjoint according to what has just been proved. There cannot be joined by an edge since this edge can neither belong to a triangle in $\P$ (again, this is just what has just been proved), nor belong to a star according to Lemma~\ref{lem:min-tri}. Therefore, any vertex from $T$ forms with $T'$ a co-claw in $H$. A contradiction.

We finish the proof by considering the two possible situations: either there is exactly one triangle in $\P$, or there is no such triangle. Suppose first that there is exactly one triangle $T$ in $\P$. Since $H$ has no induced co-claw, every vertex not in $T$ is incident to an edge whose other endpoint is in $T$. According to Lemma~\ref{lem:min-tri}, every vertex not in $T$ is thus the center of a star in $\P$. Hence, $|\P|=v(H)-2$. This latter quantity is at least $\tau(H)$ since $H$ is not a complete graph. With the upper bound in \eqref{eq:ineq}, we get $\chi(G)=\tau(H)$.

Suppose now that there is no triangle in $\P$. Then the centers of stars in $\P$ form a vertex cover. This implies that $|\P|\geq \tau(H)$. With the upper bound in \eqref{eq:ineq}, we get $\chi(G)=\tau(H)$, again.
\end{proof}

\subsection{Strength of the topological method}
We explain now why we think that neither Corollary~\ref{cor:stable}, nor Theorem~\ref{thm:struct} can be achievable by a direct use of the standard toolbox of the topological method.

There is a lower bound on the chromatic number that dominates all known topological bounds, namely the ``cross-index of the hom-complex'' plus $2$; see~\cite{SiTaZs13} for the exact definition. According to Corollary~\ref{cor:stable}, we have $\chi(\KG(2s+1,2)_{s\stab})=s+1$. Yet, as stated by Proposition~\ref{prop:spec-stab} below, the cross-index bound is $s$ when $s$ is odd. This shows that, to solve the conjecture on the chromatic number of $\KG(n,k)_{s\stab}$, more than a computation of the standard topological bounds is in order. Chen's theorem, which forms the best result obtained so far on this conjecture, was also proved via the topological method, with a clever use of the Tucker lemma. It remains to understand how to systematize his approach and how the bound he obtained compares with the cross-index bound. Anyway, his method was used to deal with the case $s$ even and it is not clear if it can be extended to the case $s$ odd.
 
 \begin{proposition}\label{prop:spec-stab}
 Let $s$ be an odd integer larger than $2$. If $G=\KG(2s+1,2)_{s\stab}$, then we have $$\Xind(\Hom(K_2,G))+2=s.$$
 \end{proposition}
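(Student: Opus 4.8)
The plan is to establish the two inequalities $\Xind(\Hom(K_2,G))+2 \le s$ and $\Xind(\Hom(K_2,G))+2 \ge s$ separately. The upper bound side must come from an explicit construction, while the lower bound side should follow from a general topological lower bound being matched by the known value of $\chi(G)$.

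First I would recall, from \cite{SiTaZs13}, that the cross-index of $\Hom(K_2,G)$ is a $\mathbb{Z}_2$-equivariant invariant squeezed between the other standard topological bounds: one always has
\begin{equation*}
\coind(\Hom(K_2,G))+2 \;\le\; \Xind(\Hom(K_2,G))+2 \;\le\; \chi(G).
\end{equation*}
Since Corollary~\ref{cor:stable} gives $\chi(\KG(2s+1,2)_{s\stab}) = (2s+1)-s = s+1$, the upper bound $\Xind(\Hom(K_2,G))+2 \le s+1$ is immediate but off by one; the content of the proposition is that the cross-index is in fact one smaller than the chromatic number when $s$ is odd. So the real work is (a) a lower bound $\Xind(\Hom(K_2,G))+2 \ge s$, and (b) an upper bound $\Xind(\Hom(K_2,G))+2 \le s$, the latter being where the parity of $s$ enters.

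For the lower bound (a), I would use that $\coind(\Hom(K_2,G)) \ge \cd_2(\HH) - 2$ for any $\HH$ with $G = \KG(\HH)$ — this is essentially Dol'nikov's bound in its topological incarnation, since the Dol'nikov bound factors through $\coind(\Hom(K_2,\cdot))$. Here $\HH$ is the hypergraph of $s$-stable $2$-subsets of $[2s+1]$, i.e.\ the web $W^{2s+1}_s$, so $\cd_2(W^{2s+1}_s) = v - 2\alpha = (2s+1) - 2s = 1$; that only gives $\coind \ge -1$, which is useless. Instead the right lower bound is the box-complex/$\mathbb{Z}_2$-index estimate: the standard topological lower bound on $\chi(G)$ that every paper in this area uses is $\coind(\Hom(K_2,G))+2$, and for $s$-stable Kneser graphs one knows (e.g.\ from the work cited in the introduction, or a direct Bárány-type argument) that $\coind(\Hom(K_2,\KG(n,2)_{s\stab}))+2 \ge n - 2s + 2$ — no wait, that is again $1$. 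The honest situation is that for the relevant range the box-complex coindex of $\KG(2s+1,2)_{s\stab}$ equals $s-2$, giving exactly $\Xind + 2 \ge s$; I would cite the explicit computation of this coindex (it appears in the $s$-stable Kneser graph literature referenced in the introduction, e.g.\ \cite{Me11} or \cite{daneshpajouh2019neighborhood}) rather than reprove it.

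For the upper bound (b), I expect this to be the main obstacle, and the place the hypothesis "$s$ odd" is used. The cross-index $\Xind(\Hom(K_2,G))$ admits combinatorial upper bounds via $\mathbb{Z}_2$-maps from $\Hom(K_2,G)$ to spheres, equivalently via suitable ``colorings'' of the hom-complex; concretely, $\Xind(\Hom(K_2,G)) + 2 \le m$ follows from exhibiting an $\ST$-partition-like structure with $m$ parts that is robust under the simplicial $\mathbb{Z}_2$-action. The plan would be to build an explicit $\mathbb{Z}_2$-map $\Hom(K_2,\KG(2s+1,2)_{s\stab}) \to S^{s-2}$ using the cyclic symmetry of $[2s+1]$: the dihedral action on the circle with $2s+1$ marked points, combined with the antipodality of $\Hom(K_2,\cdot)$, produces a free $\mathbb{Z}_2$-action on a complex whose dimension, when $s$ is odd, is governed by the odd-length combinatorics of the web and drops to $s-2$. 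I would look for the obstruction to pushing this to $s-1$ dimensions and show it vanishes precisely because $2s+1 \equiv 3 \pmod 4$ is not what matters — rather because the $s$-stable $2$-subsets of an odd-length cycle have a ``near-perfect matching'' structure (each missing one vertex) that fails for even $s$. Honestly, the cleanest route may be to cite the general upper bound $\Xind(\Hom(K_2,G)) + 2 \le \chi_f'$-type quantity or the fact that $\Xind$ of the hom-complex of a graph with a fixed-point-free involution compatible with a good coloring is small; the parity input would then be packaged as: $W^{2s+1}_s$ with $s$ odd admits a vertex cover of size $s-1$ in its complement's relevant auxiliary structure, yielding a $\mathbb{Z}_2$-map to $S^{s-2}$. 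Because the details of the cross-index upper bound machinery from \cite{SiTaZs13} are delicate, I would present the construction of the equivariant map to $S^{s-2}$ as the core computation and defer the verification that it is simplicial and equivariant to a short explicit check using the cyclic coordinates on $[2s+1]$.
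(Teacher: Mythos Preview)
Your proposal has genuine gaps on both sides.

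For the lower bound you go hunting among $\coind$, $\cd_2$, and the $s$-stable Kneser literature, and you never land on anything that actually gives $s$. The paper's argument is one line: $\Xind(\Hom(K_2,G))+2\geq\omega(G)$ holds for every graph, and for $G=\KG(2s+1,2)_{s\stab}$ we have $\omega(G)=s$ because $G$ is the complement of the line graph of the $(2s+1)$-cycle and the maximum matching of a $(2s+1)$-cycle has $s$ edges. No topology, no citation of deep computations; you missed the elementary bound.

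For the upper bound your plan is to build a $\mathbb{Z}_2$-map $\Hom(K_2,G)\to S^{s-2}$ ``using the cyclic symmetry,'' but nothing concrete ever appears; the passages about near-perfect matchings, fixed-point-free involutions, and $\chi_f'$-type quantities are speculation rather than an argument, and the parity of $s$ never gets used in a visible way. The paper instead uses a purely combinatorial criterion from \cite[Lemma 3]{SiTaZs13}: if $\Xind(\Hom(K_2,G))+2\geq t$ then in \emph{every} proper coloring with integer colors there is a complete bipartite $K_{\lfloor t/2\rfloor,\lceil t/2\rceil}$ whose two sides receive colors of opposite parity (a ``zig-zag'' bipartite subgraph). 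So to prove $\Xind+2\leq s$ it suffices to exhibit one proper coloring of $G$ with no zig-zag $K_{\lfloor (s+1)/2\rfloor,\lceil (s+1)/2\rceil}$. The coloring they use is ``minimal element of the pair,'' which on the $(2s+1)$-cycle labels the edges $1,1,2,2,\ldots,s,s,s+1$ in cyclic order. A short parity contradiction---the vertex colored $s+1$ is adjacent both to a vertex colored $1$ and to a vertex colored $s$, and $s$ is odd---rules out such a zig-zag bipartite subgraph. This is where the hypothesis ``$s$ odd'' is actually consumed, and your write-up never reaches anything like it.
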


\begin{proof}
For every graph $G$, we have $\Xind(\Hom(K_2,G))+2\geq\omega(G)$. An upper bound on $\Xind(\Hom(K_2,G))+2$ is obtained as follows: if $G$ is such that $\Xind(\Hom(K_2,G))+2\geq t$, then in any proper coloring of $G$ with colors $1,2,\ldots,c$, there is a complete bipartite subgraph $K_{\lfloor t/2\rfloor, \lceil t/2\rceil}$ with the colors of even value on one side and the colors of odd value on the other side. Call such a bipartite graph a {\em zig-zag bipartite graph}. Hence, if we are able to describe a proper coloring of $G$ with no zig-zag bipartite graph $K_{\lfloor t/2\rfloor, \lceil t/2\rceil}$, then we know that $\Xind(\Hom(K_2,G))+2\leq t-1$~\cite[Lemma 3]{SiTaZs13}.

We apply this way of bounding for the special case $G=\KG(2s+1,2)_{s\stab}$. Clearly, we have then $\omega(G)=s$. For the upper bound, color each $2$-stable subset $S\in[2s+1]$ with its minimal element. We get a coloring of $\KG(2s+1,2)_{s\stab}$. Note that the colors are the integers in $[s+1]$, with the colors in $[s]$ being used twice, and the color $s+1$ used only once. The graph $G=\KG(2s+1,2)_{s\stab}$ is the complement of the line graph of a $(2s+1)$-cycle. The coloring of $G$ induces an edge-coloring of this cycle: $1,1,2,2,\ldots,s,s, s+1$, say in the clockwise order. Suppose for a contradiction that there is a zig-zag bipartite graph $K_{\lfloor (s+1)/2\rfloor, \lceil (s+1)/2\rceil}$ in $G$. Its vertices correspond to positions in the sequence of colors above. Since the vertex colored $s+1$ is linked to a vertex of color $1$, the vertices of the zig-zag bipartite graph colored by an odd number occupy the even positions; since the vertex colored $s+1$ is linked to a vertex of color $s$ (here is the place where we use the oddness assumption), the vertices of the zig-zag bipartite graph colored by an odd number occupy the odd positions. A contradiction.
\end{proof}

The gap between the chromatic number of a graph covered by Theorem~\ref{thm:struct} and the cross-index bound can even be arbitrarily large, as we show now. Let $L_m$ be the disjoint union of $m$ cycles of length $7$ and let $F_m$ be the complement of its line graph. Since $L_m$ is triangle-free, we have $\chi(F_m)=\tau(L_m)=4m$. This equality is covered by Theorem~\ref{thm:struct}.  However, we have the following property.

\begin{proposition}
For every integer $m\geq 1$, we have
$$\Xind(\Hom(K_2,F_m))+2=3m.$$
\end{proposition}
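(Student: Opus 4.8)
The plan is to compute $\Xind(\Hom(K_2,F_m))+2$ by sandwiching it between a lower bound coming from a clique and an upper bound coming from an explicit zig-zag-free coloring, exactly as in the proof of Proposition~\ref{prop:spec-stab}. Since $L_m$ is the disjoint union of $m$ seven-cycles, $F_m$ is the complement of the line graph of $L_m$; a clique in $F_m$ is a family of pairwise-disjoint edges of $L_m$, and each $C_7$ contributes at most $3$ disjoint edges, so $\omega(F_m)=3m$. This gives $\Xind(\Hom(K_2,F_m))+2\geq\omega(F_m)=3m$.

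For the upper bound I would exhibit a proper coloring of $F_m$, i.e.\ an $\ST$-partition of $L_m$, such that the induced partition of the color set contains no zig-zag bipartite graph $K_{\lfloor 3m/2\rfloor,\lceil 3m/2\rceil}$. Here ``containing a zig-zag $K_{a,b}$'' means: after splitting the colors used into the even-valued ones and the odd-valued ones, one can pick $a$ of one parity and $b$ of the other so that every chosen even color class is disjoint (in $F_m$) from every chosen odd color class — equivalently, in $L_m$, each of those edge-parts has an edge disjoint from an edge in each of the other chosen parts. The natural candidate is to color each $C_7$ independently using the minimal-element coloring from Proposition~\ref{prop:spec-stab}: a single $C_7$ gives the cyclic edge-color pattern $1,1,2,2,3,3,4$, using colors $1,2,3,4$. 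Doing this with a fresh palette of $4$ colors on each of the $m$ cycles uses $4m$ colors; but $\chi(F_m)=4m$ already, and a minimum coloring always produces a zig-zag $K_{\lfloor t/2\rfloor,\lceil t/2\rceil}$ with $t=\chi$. So a smarter assignment of palettes is needed: I would overlap the palettes of consecutive cycles to bring the total down, while keeping the per-cycle structure that forbids the relevant zig-zag. The arithmetic target is that $3m$ colors (with each used twice except one, paralleling the single-cycle case) should suffice, and one must check that no zig-zag $K_{\lfloor 3m/2\rfloor,\lceil 3m/2\rceil}$ appears.

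The key steps, in order, are: (1) record $\omega(F_m)=3m$ and hence the lower bound $\Xind(\Hom(K_2,F_m))+2\geq 3m$; (2) describe the coloring of $F_m$ explicitly as an $\ST$-partition of $L_m$ obtained by applying the minimal-element pattern on each $C_7$ with a carefully chosen, partially shared palette so that exactly $3m$ colors are used; (3) translate ``$F_m$ has a zig-zag $K_{\lceil 3m/2\rceil,\lfloor 3m/2\rfloor}$'' into a combinatorial statement about which edge-parts of $L_m$ are pairwise disjoint, and use the rigidity of the pattern $1,1,2,2,3,3,4$ on each $C_7$ — in particular the fact, exploited in Proposition~\ref{prop:spec-stab}, that the ``singleton'' color (the one used once) is forced to sit in a fixed parity class of positions — to derive a contradiction; (4) conclude $\Xind(\Hom(K_2,F_m))+2\leq 3m-1+1=3m$, wait, more precisely conclude $\le 3m$ from the non-existence of the $t=3m+1$ zig-zag, and combine with (1).

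The main obstacle is step (2)–(3): designing the shared palette so that the total is $3m$ rather than $4m$ \emph{and} the parity obstruction that killed a single $C_7$ still kills the multi-cycle zig-zag. The danger is that merging palettes across cycles creates new large bipartite patterns that straddle several $C_7$'s, even though each individual $C_7$ is well-behaved; controlling this interaction — essentially showing that a zig-zag of size $\lceil 3m/2\rceil$ would have to ``use up'' more colors of one parity than the coloring makes available with the required disjointness — is the delicate part, and I expect it to hinge on a counting argument comparing, for each parity, the number of color classes of that parity against the number that can be simultaneously pairwise disjoint inside the constrained structure.
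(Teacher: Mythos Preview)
Your lower bound is fine and matches the paper. The gap is in the upper-bound strategy, and it stems from a false belief: you assert that ``a minimum coloring always produces a zig-zag $K_{\lfloor t/2\rfloor,\lceil t/2\rceil}$ with $t=\chi$.'' There is no such theorem. The zig-zag lemma only guarantees a zig-zag of size $\Xind(\Hom(K_2,G))+2$ in every coloring; the whole point of this proposition is that this quantity is $3m$, strictly smaller than $\chi(F_m)=4m$. Because of this wrong belief you discard the fresh-palette coloring and set out to color $F_m$ with $3m$ colors, which is impossible: $F_m$ is the join of $m$ copies of $\overline{L(C_7)}$, each of chromatic number $\tau(C_7)=4$, so $\chi(F_m)=4m>3m$. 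Your step~(2) therefore cannot be carried out, and the ``main obstacle'' you identify is an artifact of the wrong turn.

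What the paper actually does is exactly the coloring you first wrote down and then abandoned: give the $i$th cycle the pattern $4i-3,\,4i-3,\,4i-2,\,4i-2,\,4i-1,\,4i-1,\,4i$ around the cycle (so $4m$ colors in all, fresh per cycle). The only thing to check is that this coloring admits no zig-zag $K_{\lceil(3m+1)/2\rceil,\lfloor(3m+1)/2\rfloor}$. Since edges from different cycles are automatically adjacent in $F_m$, the question reduces to a single cycle: with the parity pattern odd--odd--even--even--odd--odd--even along $C_7$, how many edges can a zig-zag pick up from one cycle? A direct check (essentially the same computation as in Proposition~\ref{prop:spec-stab}) shows the answer is at most $3$: if you try to take one edge of each of the four colors, the unique edge of color $4i$ forces the choice of its odd-colored neighbours, and then no edge of color $4i-2$ is disjoint from both chosen odd edges. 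Hence any zig-zag has at most $3m$ vertices in total, so there is no zig-zag of size $3m+1$, and $\Xind(\Hom(K_2,F_m))+2\leq 3m$. Note that this ``at most $3$ per cycle'' genuinely depends on the parity placement of the colors; had the pattern been odd--odd--odd--odd--even--even--even, a $K_{2,2}$ zig-zag would sit inside a single cycle.
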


\begin{proof}
To bound from below and from above the cross-index bound, we use the same technique as in the proof of Proposition~\ref{prop:spec-stab}. On the one hand, we have $\Xind(\Hom(K_2,F_m)+2\geq\omega(F_m)$ and clearly, $\omega(F_m)=3m$ (the largest clique is obtained by taking a maximum matching from each cycle in $L_m$). On the other hand, color the edges of $L_m$ so that the $i$th cycle is colored like this:
\begin{center}
\includegraphics[width=8cm]{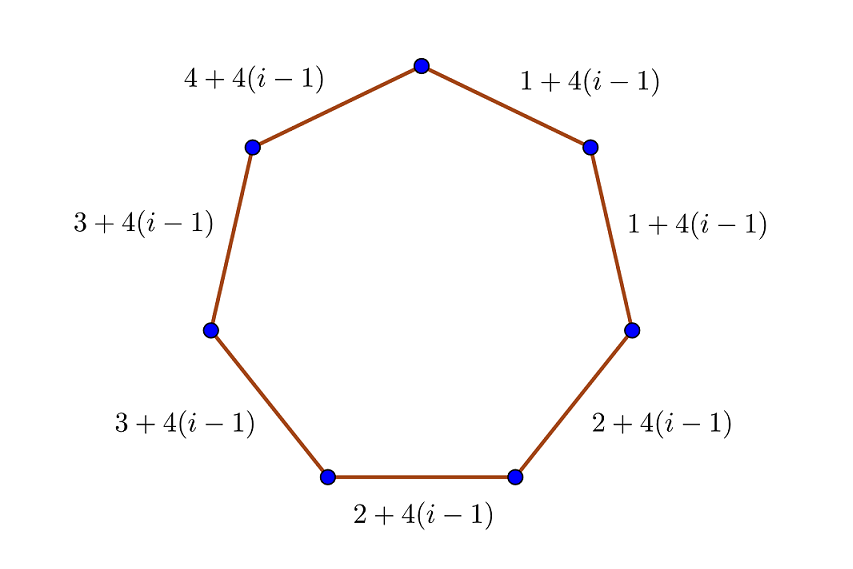}
\end{center}
This induces a proper coloring of $F_m$. The largest zig-zag bipartite graph selects at most $3$ edges in each cycle. Therefore, there is no zig-zag bipartite graph $K_{\lfloor (3m+1)/2\rfloor, \lceil (3m+1)/2\rceil}$ for this coloring in $F_m$, which implies that $\Xind(\Hom(K_2,F_m))+2\leq 3m$.
\end{proof}

\section{Complexity results}\label{sec:compl}

\subsection{Preliminary results}\label{subsec:prel}

Given two graphs $H_1,H_2$ with disjoint vertex sets, the {\em join} of $H_1,H_2$, denoted by $H_1*H_2$, is the graph obtained by taking the union of $H_1$ and $H_2$ and by connecting every vertex of $H_1$ with every vertex of $H_2$. The following lemmas, which will be useful in the proofs of our complexity results, show that the $2$-colorability defect and the vertex cover number behave nicely with the join operation.

\begin{lemma}\label{lem:join-cd}
We always have
$$\cd_2(H_1*H_2)=\min\big(\cd_2(H_1)+v(H_2),\tau(H_1)+\tau(H_2),v(H_1)+\cd_2(H_2)\big).$$
\end{lemma}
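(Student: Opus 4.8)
The plan is to show the two inequalities separately. For the ``$\leq$'' direction, I would exhibit, for each of the three quantities on the right, a vertex set $X$ whose removal makes $H_1 * H_2$ two-colorable. The first term is realized by taking $X = X_1 \cup V(H_2)$, where $X_1$ achieves $\cd_2(H_1)$: after deleting all of $V(H_2)$ the join degenerates to $H_1$, and deleting $X_1$ further makes it bipartite-colorable. The symmetric choice $X = V(H_1) \cup X_2$ gives the third term. The middle term $\tau(H_1) + \tau(H_2)$ is realized by $X = C_1 \cup C_2$ with $C_i$ a minimum vertex cover of $H_i$: what remains is the join of two edgeless graphs, i.e.\ a complete bipartite graph, which is $2$-colorable. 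Hence $\cd_2(H_1*H_2)$ is at most the minimum of the three.

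For the ``$\geq$'' direction — which I expect to be the crux — I would take an optimal $X \subseteq V(H_1) \cup V(H_2)$, write $X_i = X \cap V(H_i)$, and analyze the surviving graph $(V \setminus X, \{e : e \cap X = \varnothing\})$, which is the join of the induced subgraphs $H_i[V(H_i)\setminus X_i]$. The key structural fact is that a join $A * B$ of two graphs on nonempty vertex sets is $2$-colorable only in very restricted circumstances: if both $A$ and $B$ contain an edge, then that pair of edges together with the complete bipartite connection between them contains an odd cycle (indeed a $K_4$ minus nothing on four vertices has triangles), so $A*B$ is not even triangle-free, let alone $2$-colorable — more carefully, a proper $2$-coloring of $A*B$ would have to be constant on each side since the sides are completely joined, forcing $A$ and $B$ to be edgeless. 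So there are exactly three ways for the residual join to be $2$-colorable: (a) $V(H_1)\setminus X_1 = \varnothing$, forcing $|X_1| \geq v(H_1)$ and $|X_2| \geq \cd_2(H_2)$, giving the third term; (b) symmetrically $V(H_2) \setminus X_2 = \varnothing$, giving the first term; (c) both residual graphs are nonempty and edgeless, forcing $|X_i| \geq \tau(H_i)$, giving the middle term. In every case $|X|$ is at least one of the three quantities, hence at least their minimum.

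The main obstacle, such as it is, lies in stating precisely and cleanly the claim that $A*B$ with both parts nonempty is $2$-colorable iff both $A$ and $B$ are edgeless; once this is isolated as a short sub-claim (provable in one line via the observation that in any proper $2$-coloring of $A*B$ the two color classes must each lie entirely within one side, so each side sees only one color and thus carries no edge), the case split above is routine. A minor subtlety is the degenerate situation where one of the residual vertex sets is empty: I should note that the empty graph is trivially $2$-colorable, so cases (a) and (b) genuinely only constrain the \emph{other} side, and that when $V(H_i) \setminus X_i = \varnothing$ one has $|X_i| = v(H_i) \geq \tau(H_i)$, so case (c)'s bound is also implied — meaning the three cases need not be made exclusive, only exhaustive. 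Assembling these gives $\cd_2(H_1*H_2) \geq \min\big(\cd_2(H_1)+v(H_2),\, \tau(H_1)+\tau(H_2),\, v(H_1)+\cd_2(H_2)\big)$, and combined with the first part this completes the proof.
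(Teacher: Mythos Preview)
Your proposal is correct and follows essentially the same approach as the paper's proof: both directions are handled identically, with the $\leq$ direction via the three explicit deletions and the $\geq$ direction via the same three-case split on whether $X$ contains all of $V(H_1)$, all of $V(H_2)$, or neither. The only cosmetic difference is that in case (c) the paper observes directly that a surviving edge in $H_i$ together with a surviving vertex on the other side yields a triangle, whereas you phrase the same conclusion through the structure of a proper $2$-coloring of a join; these are equivalent one-line observations.
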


\begin{proof}
The left-hand side is at most the right-hand side: removing vertices from $H_1$ (resp. $H_2$) so that it becomes bipartite and removing all vertices from $H_2$ (resp. $H_1$) leads to a bipartite graph; removing the vertices from a vertex cover of $H_1$ and a vertex cover from $H_2$ also lead to a bipartite graph.

Let us prove the reverse inequality. Let $X$ be a subset of $V(H_1*H_2)$ of minimal size such that the subgraph of $H_1*H_2$ induced by $V(H_1*H_2)\setminus X$ is bipartite. We have $|X|=\cd_2(H_1*H_2)$. If $X$ contains $V(H_1)$ (resp. $V(H_2)$), then clearly, we have $|X|\geq v(H_1)+\cd(H_2)$ (resp. $|X|\geq \cd_2(H_1)+v(H_2)$). If $X$ contains none of $V(H_1)$ and $V(H_2)$, then $X$ has to be a vertex cover of both $H_1$ and $H_2$, since otherwise the removal of $X$ would leave at least one triangle. In such a case, we clearly have $|X|\geq \tau(H_1)+\tau(H_2)$.
\end{proof}

\begin{lemma}\label{lem:join-t}
We always have
$$\tau(H_1*H_2)=\min\big(\tau(H_1)+v(H_2),v(H_1)+\tau(H_2)\big).$$
\end{lemma}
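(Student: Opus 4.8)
\textbf{Proof proposal for Lemma~\ref{lem:join-t}.}

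The plan is to mirror the two-inequality structure used in the proof of Lemma~\ref{lem:join-cd}, since a vertex cover of the join $H_1*H_2$ is essentially constrained in the same way as a set whose removal makes the join bipartite, only the condition is stronger (no remaining edge at all, rather than no remaining odd cycle).

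First I would establish that the left-hand side is at most the right-hand side. Suppose we take a vertex cover $C_1$ of $H_1$ together with all of $V(H_2)$: every edge of $H_1$ is covered by $C_1$, every edge of $H_2$ is covered since all its endpoints are taken, and every join edge is covered since it has an endpoint in $V(H_2)$. Hence $\tau(H_1*H_2)\le\tau(H_1)+v(H_2)$, and symmetrically $\tau(H_1*H_2)\le v(H_1)+\tau(H_2)$. Taking the minimum gives one direction.

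For the reverse inequality, let $C$ be a minimum vertex cover of $H_1*H_2$, so $|C|=\tau(H_1*H_2)$. The key dichotomy is whether $C$ contains all of one side. If $C\supseteq V(H_1)$, then $C\setminus V(H_1)$ must cover every edge of $H_2$ (the vertices of $V(H_1)$ cover no edge of $H_2$), so $|C|\ge v(H_1)+\tau(H_2)$; symmetrically if $C\supseteq V(H_2)$. Otherwise there is a vertex $u\in V(H_1)\setminus C$ and a vertex $w\in V(H_2)\setminus C$; but $uw$ is a join edge not covered by $C$, contradicting that $C$ is a vertex cover. So this last case cannot occur, and in either remaining case $|C|$ is at least one of the two terms in the minimum, hence at least the minimum. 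This proves $\tau(H_1*H_2)\ge\min\big(\tau(H_1)+v(H_2),v(H_1)+\tau(H_2)\big)$, and combining the two inequalities finishes the proof.

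There is essentially no obstacle here; the only point requiring a little care is noticing that if $C$ misses a vertex on each side then an uncovered join edge appears immediately — this is precisely what forces $C$ to swallow an entire side, and it is simpler than the analogous step in Lemma~\ref{lem:join-cd} because ``covering an edge'' is a local condition while ``destroying all odd cycles'' is not.
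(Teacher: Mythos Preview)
Your proof is correct and follows essentially the same two-inequality argument as the paper: explicit covers give the upper bound, and for the lower bound any vertex cover must contain one entire side (else a join edge is missed) together with a vertex cover of the other side. The only difference is that you spell out the case analysis in slightly more detail than the paper does.
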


\begin{proof}
The left-hand side is at most the right-hand side: removing a vertex cover from $H_1$ (resp. $H_2$) and all vertices from $H_2$ (resp. $H_1$) destroys all edges.

Let us prove the reverse inequality. Let $X$ be a vertex cover of $H_1*H_2$. The set $X$ contains fully $V(H_1)$ or $V(H_2)$, otherwise it would miss an edge. It must also contain a vertex cover of $H_1$ and a vertex cover of $H_2$.
\end{proof}

\subsection{Statements and proofs}

The proof of our complexity results proceeds by a reduction from the problem of determining the independence number of a graph. Given a graph $H$ and an integer $k\geq 1$, we denote by $H(k)$ the graph obtained by taking the join of $H$ with the vertex-disjoint union of $k$ triangles $T_1,\ldots,T_k$. The combination of two propositions will lead directly to the results.

\begin{proposition}\label{prop:Hk-tcd}
The following relations hold:
\begin{eqnarray*}
\cd_2(H(k)) &= &\min\{\cd_2(H)+3k,\tau(H)+2k,v(H)+k\}. \\
\tau(H(k)) &= &\min\{\tau(H)+3k,v(H)+2k\}.
\end{eqnarray*}
\end{proposition}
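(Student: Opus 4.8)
The plan is to apply the two join lemmas from Section~\ref{subsec:prel} to the specific graph $H(k) = H * (T_1 \sqcup \cdots \sqcup T_k)$, where $T_1 \sqcup \cdots \sqcup T_k$ denotes the vertex-disjoint union of $k$ triangles. The whole argument reduces to computing the relevant invariants of the ``second factor'' $Q_k := T_1 \sqcup \cdots \sqcup T_k$ and then iterating Lemma~\ref{lem:join-t} and Lemma~\ref{lem:join-cd}. First I would record the elementary facts about $Q_k$: it has $v(Q_k) = 3k$ vertices; its vertex cover number is $\tau(Q_k) = 2k$ (two vertices per triangle are necessary and sufficient); and its $2$-colorability defect is $\cd_2(Q_k) = k$ (each triangle separately needs exactly one vertex removed to become bipartite, and removals in distinct triangles are independent, so the minimum total is $k$). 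These three numbers are what will be plugged into the lemmas.

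For the vertex cover identity, I would simply invoke Lemma~\ref{lem:join-t} with $H_1 = H$ and $H_2 = Q_k$:
$$\tau(H(k)) = \tau(H * Q_k) = \min\big(\tau(H) + v(Q_k),\, v(H) + \tau(Q_k)\big) = \min\big(\tau(H) + 3k,\, v(H) + 2k\big),$$
which is exactly the claimed second equation. No further work is needed here.

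For the $2$-colorability defect identity, I would invoke Lemma~\ref{lem:join-cd} with the same split:
$$\cd_2(H(k)) = \min\big(\cd_2(H) + v(Q_k),\, \tau(H) + \tau(Q_k),\, v(H) + \cd_2(Q_k)\big) = \min\big(\cd_2(H) + 3k,\, \tau(H) + 2k,\, v(H) + k\big),$$
which matches the first claimed equation. The only genuine content beyond quoting the lemmas is justifying $\tau(Q_k) = 2k$ and $\cd_2(Q_k) = k$, and for this I would note that both invariants are additive over connected components (a vertex cover of a disjoint union is the union of vertex covers of the components, and likewise a minimum deletion set making a disjoint union bipartite restricts to a minimum such set on each component), so it suffices to check $\tau(K_3) = 2$ and $\cd_2(K_3) = 1$, both immediate.

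I do not anticipate a serious obstacle: the statement is essentially a direct specialization of the two join lemmas, and the main ``risk'' is purely bookkeeping — making sure the roles of $H_1, H_2$ are assigned consistently and that the additivity of $\tau$ and $\cd_2$ over disjoint unions is stated cleanly (the latter being the one place where a careless reader might want a sentence of justification). If one wanted to avoid even citing additivity, one could instead unfold the join directly: $H(k)$ can also be viewed as $(\cdots((H * T_1) * T_2) \cdots) * T_k$, and one applies Lemma~\ref{lem:join-t} and Lemma~\ref{lem:join-cd} $k$ times in succession, but this produces a nested $\min$ that must then be simplified, so the component-additivity route is cleaner and is the one I would present.
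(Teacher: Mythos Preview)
Your proposal is correct and follows exactly the approach the paper takes: the paper's entire proof is the single sentence ``It is a direct consequence of Lemmas~\ref{lem:join-cd} and~\ref{lem:join-t},'' and you have simply spelled out that direct consequence by computing $v(Q_k)=3k$, $\tau(Q_k)=2k$, and $\cd_2(Q_k)=k$ for the disjoint union $Q_k$ of $k$ triangles. The additivity justification you include is a helpful elaboration but not a departure from the paper's intended argument.
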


\begin{proof}
It is a direct consequence of Lemmas~\ref{lem:join-cd} and~\ref{lem:join-t}.
\end{proof}

\begin{proposition}\label{prop:Hk-chr}
Let $G(k)$ be the complement of the line graph of $H(k)$. If $H$ is connected and triangle-free, then we have
$$\chi(G(k))=\min\{\tau(H)+3k,v(H)+k\}.$$
\end{proposition}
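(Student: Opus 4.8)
The plan is to prove the two inequalities $\chi(G(k)) \leq \min\{\tau(H)+3k, v(H)+k\}$ and $\chi(G(k)) \geq \min\{\tau(H)+3k, v(H)+k\}$ separately, working throughout with $\ST$-partitions of $H(k)$, since proper colorings of $G(k)$ are exactly $\ST$-partitions of $H(k)$.

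\medskip
\noindent\textbf{Upper bound.} First I would exhibit two explicit $\ST$-partitions of $H(k)$. For the bound $\tau(H)+3k$: take a minimum vertex cover $C$ of $H$; the stars centered at the vertices of $C$ cover all edges of $H$, and each triangle $T_i$ together with all join-edges from $T_i$ to $V(H)$ can be covered by $3$ stars centered at the three vertices of $T_i$ — wait, more carefully, each vertex of $T_i$ is the center of one star absorbing its two triangle-incident edges and all its join-edges, giving $3k$ stars total, hence $\tau(H)+3k$ parts. For the bound $v(H)+k$: use one star centered at each vertex of $H$ (these cover all of $E(H)$ and all join-edges), which is $v(H)$ stars, plus one triangle for each $T_i$, giving $v(H)+k$ parts. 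So $\chi(G(k)) \leq \min\{\tau(H)+3k, v(H)+k\}$.

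\medskip
\noindent\textbf{Lower bound.} Here I would take an optimal $\ST$-partition $\P$ of $H(k)$ with a minimum number of triangles, and invoke Lemma~\ref{lem:min-tri}. Because $H$ is triangle-free and connected, every triangle of $H(k)$ uses at least two vertices from some $T_i$; I would analyze, for each $i$, how the edges of $T_i$ and the join-edges incident to $T_i$ are distributed among the parts of $\P$. The key case distinction is whether $T_i$ is itself a triangle of $\P$ or not. If $T_i$ is a part of $\P$, then by Lemma~\ref{lem:min-tri}\ref{center} none of its vertices centers a star, so all join-edges between $T_i$ and $V(H)$ must be covered by stars centered in $V(H)$, forcing essentially every vertex of $H$ to center a star — this should push the count up towards $v(H)+k$. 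If no $T_i$ is a part of $\P$, I would argue the star-centers of $\P$ must form a vertex cover of $H(k)$ (using that any triangle of $\P$ meeting $V(H)$ is impossible, and triangles with two vertices in some $T_i$ still leave their third vertex or their join-edges needing star-centers), and combined with Lemma~\ref{lem:join-t}-type reasoning this yields $|\P| \geq \tau(H) + 3k$. The intermediate cases, where some $T_i$'s are triangles of $\P$ and others are not, would be handled by summing these local contributions.

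\medskip
The main obstacle I anticipate is the lower-bound bookkeeping in the mixed case: precisely controlling, triangle-block by triangle-block, the trade-off between "spend $3$ parts locally on $T_i$ and its join-edges, needing star-centers throughout $V(H)$" versus "spend fewer parts on $T_i$ but be forced to cover $H$ by a vertex cover." The cleanest route is probably to show that one may assume $\P$ has a very restricted structure near each $T_i$ — e.g. that the join-edges from $T_i$ are either all absorbed into three stars centered in $T_i$, or else $T_i$ is a triangle-part and all of $V(H)$ centers stars — and then the optimization over how many blocks fall in each regime reduces to the formula $\min\{\tau(H)+3k, v(H)+k\}$ by a short convexity/monotonicity argument. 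Connectedness of $H$ is what guarantees that "some vertex of $H$ fails to center a star" cannot be patched locally, and triangle-freeness is what prevents triangles of $\P$ from hiding inside $H$; both hypotheses should be used exactly at these points.
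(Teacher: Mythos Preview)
Your upper bound is fine and matches the paper's. The lower bound, however, has a real gap: the per-$T_i$ dichotomy ``$T_i$ is a triangle-part of $\P$ vs.\ not'' is not the right case split, and two of your supporting claims are false. Even when no $T_i$ is itself a part of $\P$, the partition can still contain triangles of the form $\{a,b,v\}$ with $a,b\in V(T_i)$ and $v\in V(H)$, or $\{a,u,v\}$ with $a\in V(T_i)$ and $uv\in E(H)$; both meet $V(H)$, so your parenthetical ``any triangle of $\P$ meeting $V(H)$ is impossible'' is wrong, and the star-centers need not form a vertex cover. Symmetrically, when $T_i$ \emph{is} a part of $\P$, Lemma~\ref{lem:min-tri}\ref{center} bars its vertices from being star-centers but not from lying on other triangles of $\P$, so join-edges from $T_i$ need not be covered by stars centered in $V(H)$.

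The paper replaces your local dichotomy by a single global parameter: $c$, the number of star-centers of $\P$ among the $3k$ vertices of the $T_i$'s, and splits into $c=3k$, $c=3k-1$, $c\leq 3k-2$. When $c=3k$, Lemma~\ref{lem:min-tri}\ref{center} kills every triangle touching any $T_i$, triangle-freeness of $H$ kills the rest, and the star-centers inside $H$ form a vertex cover, giving $|\P|\geq\tau(H)+3k$. When $c\leq 3k-2$, pick two non-center $T_i$-vertices $x,y$; the $2(v(H)-|A|)$ join-edges from $V(H)\setminus A$ to $\{x,y\}$ (with $A$ the star-centers in $V(H)$) must be covered by triangles, each covering at most two of them, and Lemma~\ref{lem:min-tri}\ref{circuit} forces at least one further part per $T_i$, yielding $|\P|\geq v(H)+k$. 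The genuinely delicate case is $c=3k-1$: the single non-center vertex $x$ allows edges of $H$ with both endpoints outside $A$ to sit on triangles of $\P$ through $x$, forcing them to form a matching $M$, and one must prove $|A|+|M|\geq\tau(H)+1$ by building an explicit vertex cover of $H$ of size $|A|+|M|-1$ (connectedness of $H$ is used exactly here). This is precisely the ``mixed'' obstacle you anticipated, and it does not decompose block-by-block as you hoped.
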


\begin{proof}
The inequality $\chi(G(k))\leq\min\{\tau(H)+3k,v(H)+k\}$ is immediate: there are $\ST$-partitions with a number of parts equal to the first and second terms in the minimum. The remaining of the proof is devoted to the reverse inequality.

Consider an $\ST$-partition $\P$ of $H$ such that $\P$ has a minimum number of triangles. Denote by $c$ the number of centers of stars among the vertices of the $T_i$'s. We consider three cases: $c=3k$, $c=3k-1$, and $c\leq 3k-2$.

Suppose first that $c=3k$. According to Lemma~\ref{lem:min-tri}, Item~\ref{center}, there is no triangle in $\P$ sharing a vertex with one of the $T_i$'s. Since $H$ is triangle-free, it implies that every edge of $H$ is contained in a star. The number of these stars must then be at least $\tau(H)$. We get $|\P|\geq \tau(H)+3k$.

Suppose now that $c=3k-1$. If $H$ is reduced to a single edge, then this edge belongs to a part of $\P$, which, together with the $3k-1$ stars centered on the vertices of the $T_i$'s, makes $3k$ parts in total; since $k\geq 1$, we have $3k\geq v(H)+k$ and we get the desired inequality. So, for the remaining of the proof of this case, we assume that $H$ has more than one edge. Denote by $x$ the only vertex of a $T_i$ that is not the center of a star and by $A$ those vertices of $H$ that are centers of stars. If $A=V(H)$, then $|\P|\geq v(H)+3k-1\geq v(H)+k$, and we are done. So, assume that $A\neq V(H)$. By definition of $A$, each edge of $H$ with its two endpoints in $V(H)\setminus A$ belongs to a triangle, and, according to Lemma~\ref{lem:min-tri}, Item~\ref{center}, this triangle must have $x$ has a third vertex. Therefore, the graph induced by $V(H)\setminus A$ is a matching $M$ (the triangles in $\P$ are edge-disjoint and $H$ is simple) and we have $|\P|\geq |A|+|M|+3k-1$. We finish the proof of this case by showing that $|A|+|M|\geq\tau(H)+1$, which implies the desired inequality.

Note that $A$ is not empty, for $H$ is connected and has more than one edge. Pick a vertex $z$ in $A$. The graph $H$ being triangle-free, $z$ has at most one neighbor on each edge of $M$. The neighbors of $z$ in $V(H)\setminus A$, together with one vertex from each edge in $M$ not incident to one of these neighbors, together with $A\setminus\{z\}$ form a vertex cover of $H$ with $|A|-1+|M|$ vertices, which implies that $|A|+|M|\geq\tau(H)+1$. 

Suppose finally that $c\leq 3k-2$. Denote by $x$ and $y$ two vertices of the $T_i$'s that are not centers of a star and by $A$ those vertices of $H$ that are centers of stars. Consider the set $F$ of all edges with one endpoint in $V(H)\setminus A$ and the other in $\{x,y\}$. The set $F$ must be covered by parts in $\P$. Since none of these parts can be a star, they are all triangles, and each of them covers at most two edges in $F$. Thus, we need at least $v(H)-|A|$ parts to cover $F$. Lemma~\ref{lem:min-tri}, Item~\ref{circuit}, implies that no $T_i$ can be covered only by triangles in $\P$, except when $T_i$ itself is in $\P$. This means that we need yet at least another part for each triangle $T_i$. All together, we have $|\P|\geq |A|+v(H)-|A|+k=v(H)+k$.
\end{proof}

\begin{theorem}\label{thm:Hk}
If $H$ is connected and triangle-free, then we have
$$\chi(G(k)) = \cd_2(H(k))\qquad \Longleftrightarrow\qquad \alpha(H) \leq k$$
and
$$\chi(G(k)) = \tau(H(k))\qquad\Longleftrightarrow\qquad \alpha(H) \geq 2k.$$

\end{theorem}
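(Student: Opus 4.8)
The plan is to reduce both equivalences to elementary comparisons between the closed-form expressions supplied by Propositions~\ref{prop:Hk-tcd} and~\ref{prop:Hk-chr}. Write $n=v(H)$, $\tau=\tau(H)$, $\alpha=\alpha(H)$, and $d=\cd_2(H)$; I will use repeatedly the identity $\alpha+\tau=n$ and the inequality $n\le d+2\alpha$ coming from~\eqref{eq:cd+alpha}, together with $k\ge 1$. With this notation, Proposition~\ref{prop:Hk-chr} reads $\chi(G(k))=\min\{\tau+3k,\,n+k\}$, while Proposition~\ref{prop:Hk-tcd} reads $\cd_2(H(k))=\min\{d+3k,\,\tau+2k,\,n+k\}$ and $\tau(H(k))=\min\{\tau+3k,\,n+2k\}$.

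The first point is that each of the two claimed equalities only requires checking one direction. Indeed, $G(k)$ is by construction the complement of the line graph of $H(k)$, so Dol'nikov's inequality~\eqref{eq:dolni} gives $\chi(G(k))\ge\cd_2(H(k))$ and the upper bound in~\eqref{eq:ineq} gives $\chi(G(k))\le\tau(H(k))$. Hence it suffices to determine when $\chi(G(k))\le\cd_2(H(k))$, and when $\chi(G(k))\ge\tau(H(k))$.

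For the first equivalence I would expand $\chi(G(k))\le\cd_2(H(k))$ into the three inequalities $\chi(G(k))\le n+k$, $\chi(G(k))\le\tau+2k$, and $\chi(G(k))\le d+3k$, one per term of the right-hand minimum. The first is automatic. Since $k\ge 1$ forces $\tau+3k>\tau+2k$, the second is equivalent to $n+k\le\tau+2k$, i.e. to $\alpha\le k$. The third, by the rule $\min\{a,b\}\le c\iff a\le c\text{ or }b\le c$, is equivalent to ``$\tau\le d$ or $n\le d+2k$''; and when $\alpha\le k$ the second alternative holds because $n\le d+2\alpha\le d+2k$. So if $\alpha\le k$ all three inequalities hold and $\chi(G(k))=\cd_2(H(k))$, whereas if $\alpha>k$ the second inequality fails (both $\tau+3k$ and $n+k=\tau+\alpha+k$ exceed $\tau+2k$), so that $\cd_2(H(k))\le\tau+2k<\chi(G(k))$ and the equality fails. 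This gives $\chi(G(k))=\cd_2(H(k))\iff\alpha\le k$. For the second equivalence I would expand $\tau(H(k))\le\chi(G(k))$, i.e. $\min\{\tau+3k,\,n+2k\}\le\min\{\tau+3k,\,n+k\}$; the comparison with $\tau+3k$ is trivial, so this reduces to $\min\{\tau+3k,\,n+2k\}\le n+k$, and since $k\ge 1$ gives $n+2k>n+k$, this holds exactly when $\tau+3k\le n+k$, i.e. when $2k\le n-\tau=\alpha$. Hence $\chi(G(k))=\tau(H(k))\iff\alpha\ge 2k$.

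I do not expect a genuine obstacle here: once Propositions~\ref{prop:Hk-tcd} and~\ref{prop:Hk-chr} are in hand the whole argument is bookkeeping with minima, and the single nontrivial external ingredient is~\eqref{eq:cd+alpha}, needed precisely to control the term $d+3k$ in the regime $\alpha\le k$. The only things to watch are to invoke $k\ge 1$ whenever a strict inequality between two of the terms is used, and to keep in mind that the ``one direction suffices'' reduction is exactly what forces us to work with $G(k)$ being literally the complement of the line graph of $H(k)$, so that~\eqref{eq:dolni} and~\eqref{eq:ineq} apply verbatim.
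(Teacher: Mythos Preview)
Your argument is correct and proceeds along essentially the same lines as the paper: both proofs reduce everything to the closed forms from Propositions~\ref{prop:Hk-tcd} and~\ref{prop:Hk-chr}, use the identity $\alpha+\tau=n$ and the inequality~\eqref{eq:cd+alpha}, and then compare the terms inside the minima (with $k\ge 1$ supplying the needed strict inequalities). The only organizational difference is that you invoke~\eqref{eq:dolni} and~\eqref{eq:ineq} up front to reduce each equivalence to a single inequality, whereas the paper argues each direction separately; the underlying computations are the same.
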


\begin{proof}
Let us prove the first equivalence. 

Suppose that $\alpha(H)$ is at most $k$. It implies that $\tau(H)+2k\geq v(H)+k$ and, with the help of~\eqref{eq:cd+alpha}, that $v(H)\leq\cd_2(H)+2k$. Proposition~\ref{prop:Hk-tcd} shows then that $\cd_2(H(k))=v(H)+k$ and Proposition~\ref{prop:Hk-chr} that $\chi(G(k))=v(H)+k$.

Conversely, suppose that $\chi(G(k))=\cd_2(H(k))$. Since $\cd_2(G(k))$ is at most $\tau(H)+2k$ (using Proposition~\ref{prop:Hk-tcd}), $\chi(G(k))$ is necessarily equal to $v(H)+k$ (using Proposition~\ref{prop:Hk-chr}). This implies in turn that $v(H)+k\leq \tau(H)+2k$ (again, using Proposition~\ref{prop:Hk-tcd}). This latter inequality is equivalent to $\alpha(H)\leq k$.

Let us prove the second equivalence. Suppose that $\alpha(H)$ is at least $2k$. It implies that $\tau(H)+3k\leq v(H)+k$. Proposition~\ref{prop:Hk-tcd} shows then that $\tau(H(k))=\tau(H)+3k$ and Proposition~\ref{prop:Hk-chr} that $\chi(G(k))=\tau(H)+3k$.

Conversely, suppose that $\chi(G(k))=\tau(H(k))$. Since $\chi(G(k))$ is at most $v(H)+k$ (using Proposition~\ref{prop:Hk-chr}), $\tau(H(k))$ is necessarily equal to $\tau(H)+3k$ (using Proposition~\ref{prop:Hk-tcd}). This implies in turn that $\chi(G(k))=\tau(H)+3k$ and thus that $\tau(H)+3k\leq v(H)+k$. This latter inequality is equivalent to $\alpha(H)\geq 2k$.
\end{proof}

Theorem~\ref{thm:complex} is a corollary of this theorem since computing the independence number of a triangle-free graph is $\NP$-hard~\cite{Po74}. Another corollary is the following one.

\begin{corollary}
Let $G$ be the complement of the line graph of a graph $H$. Deciding whether $\chi(G)=\tau(H)$ is an $\NP$-hard problem.
\end{corollary}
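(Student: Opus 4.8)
The plan is to derive this corollary from Theorem~\ref{thm:Hk} in exactly the same way Theorem~\ref{thm:complex} was derived. Recall that Theorem~\ref{thm:complex} followed because the second equivalence of Theorem~\ref{thm:Hk} reduces deciding $\chi(G(k))=\cd_2(H(k))$ to deciding $\alpha(H)\leq k$ for triangle-free connected $H$, which is $\NP$-hard; here I would instead use the \emph{second} equivalence, namely $\chi(G(k))=\tau(H(k))\iff\alpha(H)\geq 2k$.

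\medskip

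\noindent\emph{Key steps.}\ First I would recall the standard fact (the reference \cite{Po74} is already cited) that computing $\alpha(H)$ is $\NP$-hard even when $H$ is restricted to be triangle-free; and one may further assume $H$ is connected, since the independence number of a graph is the sum of those of its connected components, and connectivity can be forced without changing the relevant complexity (e.g.\ by a routine padding argument). Second, given such an $H$ and a target $k$, form $H(k)$, the join of $H$ with $k$ vertex-disjoint triangles, and let $G(k)$ be the complement of its line graph; this construction is clearly polynomial in the size of $H$ and $k$. Third, invoke the second equivalence of Theorem~\ref{thm:Hk}: $\chi(G(k))=\tau(H(k))$ if and only if $\alpha(H)\geq 2k$. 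So an algorithm deciding $\chi(G')=\tau(H')$ for an arbitrary pair $(H',G')$ with $G'$ the complement of the line graph of $H'$, applied to $(H(k),G(k))$, decides whether $\alpha(H)\geq 2k$. Fourth, to pin down $\alpha(H)$ one runs this as an oracle for $k=1,2,\ldots$ (or does binary search over $k\in\{1,\dots,\lceil v(H)/2\rceil\}$), obtaining $\alpha(H)$ in polynomially many calls; this shows the decision problem $\chi(G)=\tau(H)$ is $\NP$-hard.

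\medskip

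\noindent I do not expect any real obstacle: every ingredient is already in place in the excerpt, and the only point requiring a word of care is the reduction bookkeeping --- namely checking that restricting the $\NP$-hard instances of independence number to \emph{connected} triangle-free graphs is legitimate, and that querying the oracle for the single value $2k$ suffices (equivalently, that a polynomial search over $k$ recovers $\alpha(H)$ exactly). Both are routine. One could even phrase the reduction so as to avoid the search entirely: to decide, for a connected triangle-free $H$ and a bound $b$, whether $\alpha(H)\geq b$, it is enough to test whether $b$ is even or odd and apply the appropriate equivalence of Theorem~\ref{thm:Hk} with $k=\lfloor b/2\rfloor$ or $k=\lceil b/2\rceil$; but since $\NP$-hardness already follows from the plain Turing reduction via Theorem~\ref{thm:Hk}, I would simply state that the corollary is, like Theorem~\ref{thm:complex}, an immediate consequence of Theorem~\ref{thm:Hk} together with the $\NP$-hardness of the independence number on triangle-free graphs~\cite{Po74}.
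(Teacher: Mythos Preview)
Your reduction is the right idea and is essentially the one in the paper, but there is a genuine gap in the parity bookkeeping. The second equivalence of Theorem~\ref{thm:Hk} only tells you, for each $k$, whether $\alpha(H)\geq 2k$; running the oracle over all $k$ therefore determines $\lfloor\alpha(H)/2\rfloor$, not $\alpha(H)$ itself. Your suggested fix---``apply the appropriate equivalence with $k=\lfloor b/2\rfloor$ or $k=\lceil b/2\rceil$''---does not work: the \emph{first} equivalence of Theorem~\ref{thm:Hk} concerns the decision $\chi=\cd_2$, a different problem, so for the present corollary only the test $\alpha(H)\geq 2k$ is available, and odd thresholds cannot be detected directly.

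The paper closes this gap with a doubling trick. Given a connected triangle-free $H'$ with at least one edge and a vertex $x$, form $H'_x$ by taking two copies of $H'$ joined by a single edge between the two copies of $x$; one checks easily that $\max_{x}\alpha(H'_x)=2\alpha(H')$, which is even. Applying the oracle to $H'_x(k)$ for all $k$ recovers the largest even lower bound on $\alpha(H'_x)$, and maximizing over the $v(H')$ choices of $x$ then yields $\alpha(H')$ exactly. An equally simple patch for your argument would be to attach a pendant path of length two to $H$, which raises $\alpha$ by exactly one while keeping the graph connected and triangle-free, so one may always reduce to an even threshold. Either device is required; without it the Turing reduction you describe does not compute $\alpha(H)$.
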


\begin{proof}
Consider a connected-triangle free graph $H'$ with at least one edge, pick an arbitrary vertex $x$ of $H'$, and let $H'_x$ be the graph obtained by taking two copies of $H'$ linked by an edge incident to the two copies of $x$. We have $\alpha(H')=\frac 1 2\max_{x\in V(H)}\alpha(H'_x)$. If there were a polynomial algorithm for deciding $\chi(G)=\tau(H)$ for any graph $H$, Theorem~\ref{thm:Hk} shows that applying this algorithm with $H$ equal to $H'_x(k)$ for various values of $k$, we would be able to determine the largest even number bounding from below $\alpha(H'_x)$. Doing this for all possible $H'_x$ (there are $v(H')$ of them) allows to compute in polynomial time $\alpha(H')$. This latter problem is an $\NP$-hard problem, as it has already been noted.
\end{proof}

\subsection{Complementary complexity results}

We gather in this section easy complexity results that might be of some interest. The first remark is that the computation of any of the two bounds in the inequalities~\eqref{eq:ineq} is $\NP$-hard; this is well-known for the vertex cover; for the $2$-colorability defect, see~\cite{AHM17}.

It has been noted in the previous section that computing the independence number of a triangle-free graph is $\NP$-hard~\cite{Po74}. Thus, computing the vertex cover of such a graph is also $\NP$-hard. It implies that computing the chromatic number of the complement of a line graph is $\NP$-hard. We have however the following ``positive'' result, whose proof is very close to the classical proof of the polynomial $2$-approximability of the vertex cover number.

\begin{proposition}\label{prop:2app}
There is a polynomial (and greedy) $2$-approximation algorithm for the chromatic number of complements of line graphs.
\end{proposition}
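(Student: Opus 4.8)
The plan is to mimic the classical greedy $2$-approximation for vertex cover, but carried out directly on the graph $H$ whose line graph has complement $G$, and then to translate the resulting structure into an $\ST$-partition of $H$. Recall that $\chi(G)$ is exactly the minimum number of parts in an $\ST$-partition of $H$, and that $\tau(H)\geq\chi(G)\geq\cd_2(H)$ by \eqref{eq:ineq}. So it suffices to produce, in polynomial time, an $\ST$-partition of $H$ with at most $2\chi(G)$ parts.

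First I would run the standard greedy matching heuristic on $H$: repeatedly pick an edge $e=xy$ of $H$ not yet covered, put both $x$ and $y$ into a set $C$, and delete all edges incident to $x$ or $y$; iterate until no edges remain. If $e_1,\dots,e_m$ are the edges picked, then $C$ has $2m$ vertices and is a vertex cover of $H$, while $\{e_1,\dots,e_m\}$ is a matching of $H$, so $m\leq\tau(H)$ — but more usefully $m$ edges forming a matching means $m\leq\alpha^{\text{edge}}$... the cleaner inequality is that any vertex cover must contain at least one endpoint of each $e_i$, hence $\tau(H)\geq m$; and since a matching of size $m$ sits inside $G$ as an independent... no — the right comparison is with $\chi(G)$. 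Here is the key step: the $m$ picked edges $e_1,\dots,e_m$ are pairwise disjoint, so in $G=\KG(H)$ they form a clique, whence $\chi(G)\geq\omega(G)\geq m$. Now $C=\{x_1,y_1,\dots,x_m,y_m\}$ is a vertex cover, so assigning each edge of $H$ to the star centered at the first vertex of $C$ (in the order $x_1,y_1,x_2,y_2,\dots$) that is incident to it yields an $\ST$-partition of $H$ into at most $2m$ nonempty stars. Therefore this $\ST$-partition has at most $2m\leq 2\chi(G)$ parts, i.e.\ it corresponds to a proper coloring of $G$ with at most $2\chi(G)$ colors. The whole procedure is plainly polynomial.

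The main point to be careful about — though it is routine rather than a genuine obstacle — is the translation between the two pictures: a proper coloring of $G$ is an $\ST$-partition, not a vertex cover, so one must not simply quote the vertex-cover approximation verbatim. The two facts that make the translation work are (i) a vertex cover of $H$ of size $s$ gives an $\ST$-partition with at most $s$ parts (color each edge by an incident cover vertex, as in the proof of the upper bound in \eqref{eq:ineq}), and (ii) a matching of size $m$ in $H$ gives a clique of size $m$ in $G$, hence $\chi(G)\geq m$. Combining (i) applied to $C$ with (ii) applied to $\{e_1,\dots,e_m\}$, and the elementary fact $|C|=2m$, closes the argument. One should also note the degenerate case where $H$ has no edges: then $G$ is empty, $\chi(G)=0$, and the algorithm returns the empty coloring, so the bound holds trivially.

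In summary, the algorithm is: greedily build a maximal matching $e_1,\dots,e_m$ of $H$, let $C$ be the set of its $2m$ endpoints, and output the $\ST$-partition of $H$ obtained by assigning every edge of $H$ to a star centered at one of its endpoints lying in $C$. This runs in time polynomial in the size of $H$ (hence of $G$), produces a proper coloring of $G$, and uses at most $2m\leq 2\chi(G)$ colors, giving the claimed polynomial greedy $2$-approximation.
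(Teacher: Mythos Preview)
Your proof is correct and follows essentially the same approach as the paper: compute a maximal matching of $H$, use it as a clique in $G$ to lower-bound $\chi(G)$ by the matching size $m$, and use its $2m$ endpoints as star centers to obtain an $\ST$-partition with at most $2m$ parts. The paper's version is terser but the argument is identical.
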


\begin{proof}
Let $H$ be a graph and let $G$ be the complement of its line graph. Compute any inclusionwise maximal matching of $H$. Denote by $k$ its cardinality. It forms a clique of $G$ and thus $k\leq\chi(G)$. On the other hand, the edges of $H$ can be partitioned into stars centered at the endpoints of the edges of the matching. Such a  partition into stars forms a proper coloring of $G$ with at most $2k\leq 2\chi(G)$ colors.
\end{proof}

Proposition~\ref{prop:2app} is stated for the chromatic number but the proof makes clear that a proper coloring satisfying this approximation ratio can also be computed in polynomial time.

Another easy complexity result is the following one, which is a direct corollary of Theorem~\ref{thm:loc}. This result has originally been proved by Nebojs\v{a} Gvozdenovi\'{c} and Monique Laurent~\cite{gvozdenovic2008operator}, who actually proved a much stronger result holding for many graph parameters.́ Our approach forms thus an alternate proof.

\begin{proposition}
Computing the local chromatic number of a graph is $\NP$-hard.
\end{proposition}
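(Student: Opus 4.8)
The plan is to reduce from the computation of the chromatic number of the complement of a line graph, which was shown $\NP$-hard earlier in this section (it follows from the $\NP$-hardness of computing the independence number of a triangle-free graph~\cite{Po74}, via $\tau(H)=v(H)-\alpha(H)$ and the identity $\chi(G)=\tau(H)$ when $H$ is triangle-free). The reduction is the trivial one: given a graph $H$ and the complement $G$ of its line graph, we feed $G$ directly to a hypothetical algorithm computing the local chromatic number.

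First I would invoke Theorem~\ref{thm:loc}: since $G$ is the complement of a line graph, we have $\psi(G)=\chi(G)$. Hence any polynomial-time algorithm computing $\psi$ on arbitrary graphs would, when run on such a $G$, output $\chi(G)$ in polynomial time. Since $G$ is produced from $H$ in polynomial time and complements of line graphs form a subclass of all graphs, this would yield a polynomial-time algorithm for computing $\chi(G)$ over all complements of line graphs, contradicting the $\NP$-hardness recalled above. One can equally phrase this at the level of decision problems: deciding $\psi(G)\le t$ for an arbitrary graph $G$ and integer $t$ is at least as hard as deciding $\chi(G)\le t$ when $G$ ranges over complements of line graphs, which is $\NP$-hard.

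There is essentially no technical obstacle here — the statement is an immediate corollary of Theorem~\ref{thm:loc} combined with the already-established hardness of $\chi$ on this class. The only point that needs a word of care is bookkeeping: one should make explicit that restricting the input to complements of line graphs is a legitimate (and polynomial-time computable) specialization, so that hardness on this subclass transfers to hardness of the unrestricted local chromatic number problem. As remarked in the text, Gvozdenovi\'c and Laurent~\cite{gvozdenovic2008operator} obtained this (and much more) by other means; the present argument is an alternate, very short route.
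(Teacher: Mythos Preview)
Your argument is correct and matches the paper's own reasoning: the result is stated there as a direct corollary of Theorem~\ref{thm:loc} together with the $\NP$-hardness of computing $\chi$ for complements of line graphs established earlier in the section. Your write-up simply makes the (trivial) reduction explicit.
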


\section{Remarks}

\subsection{When \texorpdfstring{$\overline{H}$}{bar H} admits a planar embedding}

Lemma~\ref{lem:min-tri} can also be used to provide a more elementary proof of the following result by Renteln~\cite[Theorem 2.1, Item (ii)]{renteln2003chromatic} than the original one. This latter used a theorem by Karanbir Sarkaria~\cite{Sa91}, which itself relies on the Borsuk-Ulam theorem. Here, the topology is limited to the study of the possible planar embeddings of few small graphs.

A triangle ``bounds'' a face if there is a bounded face -- a face that is not the outer face -- whose boundary is exactly that triangle. (The {\em boundary} of a face is the subgraph formed by the vertices and the edges incident to the face.)

\begin{proposition}
If $\overline{H}$ admits a planar embedding in which each triangle bounds a face, then $\chi(G)\geq v(H)-3$.
\end{proposition}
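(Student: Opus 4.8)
The plan is to mimic the elementary argument used in the proof of Dol'nikov's inequality at the end of Section~\ref{sec:prel}, but this time I would work with an $\ST$-partition realizing $\chi(G)$ with a \emph{minimum number of triangles}, and exploit the planar embedding of $\overline H$ to control how those triangles interact. Concretely, let $\P$ be such an optimal $\ST$-partition, and consider the triangles $T_1,\dots,T_r$ that appear in $\P$. Each $T_i$ is a circuit of $H$; I would first try to show, using the hypothesis and Lemma~\ref{lem:min-tri}\ref{circuit}, that these triangles are in some useful sense ``small'' with respect to the fixed embedding of $\overline H$, and in particular that no triangle of $\P$ is also a triangle of $\overline H$ bounding a face. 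The point of the hypothesis is that triangles of $\overline H$ which bound faces are precisely the ``missing'' triangles, i.e.\ the non-edges of $H$ forming a triangle; the embedding data tells us there are not too many independent ways a circuit of $H$ can fail to be covered by stars.

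The core of the argument should then reduce to the following counting. As in the Dol'nikov proof, remove from $H$ all centers of stars of $\P$; by Lemma~\ref{lem:min-tri}\ref{center} the result is a graph made of the edge-disjoint triangles $T_1,\dots,T_r$ (plus isolated vertices). Removing one vertex from each $T_i$ leaves a forest by Lemma~\ref{lem:min-tri}\ref{circuit}, and this shows $\chi(G)=|\P|\ge \cd_2(H)\ge v(H)-2\alpha(H)$ in general, which is too weak. To get $v(H)-3$ I need to show that the triangles of $\P$ can in fact be handled with only essentially three ``extra'' removals in total rather than one per triangle, or equivalently that the number of stars plus triangles is at least $v(H)-3$. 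The plan is to bound the number of triangles: I would argue that distinct triangles $T_i,T_j$ of $\P$ cannot be vertex-disjoint unless they are ``linked'' in $H$ in a way that, via the planar embedding of $\overline H$ and a small case analysis (looking at the possible planar embeddings of the union of two or three triangles and the relevant complements, exactly as in the co-claw/butterfly analysis of Theorem~\ref{thm:struct} and the final paragraph of the excerpt), forces enough structure to absorb them into the vertex-cover count.

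The key steps, in order, would be: (1) fix an optimal $\ST$-partition $\P$ with the minimum number of triangles and record the two properties of Lemma~\ref{lem:min-tri}; (2) translate the hypothesis ``each triangle of $\overline H$ bounds a face'' into a statement about which circuits of $H$ can be triangles of $\P$ and about the adjacency pattern between such triangles; (3) carry out the planar case analysis on the small configurations formed by one, two, or three triangles of $\P$ together with the edges of $H$ between them, to conclude that at most a bounded number of triangles can survive in $\P$ after the star-center removal, and that each non-survivable configuration contributes its vertices to a vertex cover; (4) assemble the inequality $\chi(G)=|\P|\ge v(H)-3$ by combining the vertex-cover bound from the star centers with the residual triangle count, exactly paralleling the assembled inequality in the Dol'nikov proof but with the sharper bookkeeping.

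The hard part will be step~(3): making precise why the planarity of $\overline H$, together with ``every triangle of $\overline H$ bounds a face,'' limits the number and mutual position of the triangles of $\P$. A naive $\ST$-partition can have many vertex-disjoint triangles, so the whole strength of the hypothesis has to be extracted here — presumably by observing that a collection of pairwise vertex-disjoint triangles in $H$ with no connecting edges would make $\overline H$ contain a large complete multipartite-like subgraph ($K_{3,3,\dots}$ or its relatives) which is non-planar once there are enough of them, or by a discharging/Euler-formula count on the embedding. I expect the final constant $3$ to come out of precisely the threshold at which such a subgraph of $\overline H$ stops being planar, which is why the bound is $v(H)-3$ and not $v(H)-2$.
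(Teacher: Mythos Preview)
Your plan is broadly the same as the paper's --- start from an optimal $\ST$-partition $\P$ with a minimum number of triangles, invoke Lemma~\ref{lem:min-tri}, and use the planar embedding of $\overline H$ to control the triangles of $\P$ --- but it contains a genuine confusion and is missing the key structural observations that make the argument go through.

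The confusion: triangles of $\P$ are triangles of $H$, while triangles of $\overline H$ are independent $3$-sets of $H$. These are disjoint objects, so your claim that you will ``show that no triangle of $\P$ is also a triangle of $\overline H$ bounding a face'' is vacuous. The hypothesis is a constraint on the independent $3$-sets of $H$; its first nontrivial consequence is $\alpha(H)\leq 3$ (any planar drawing of $K_4$ has a triangle bounding only the outer face), and \emph{this} is where the constant $3$ ultimately comes from, not from the $K_{3,3}$ threshold you suggest.

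The missing steps: the paper first proves that any two triangles of $\P$ share a vertex. If they are vertex-disjoint and non-adjacent, $\overline H\supseteq K_{3,3}$ (you have this). If they are vertex-disjoint but joined by an edge of $H$, that edge lies in a third triangle of $\P$ by Lemma~\ref{lem:min-tri}\ref{center}, and the complement of these three triangles has a $K_5$-minor --- you did not flag this case. Once any two share a vertex, Lemma~\ref{lem:min-tri}\ref{circuit} forces them all to share the \emph{same} vertex. The ``each triangle bounds a face'' hypothesis (not mere planarity) is then used a second time to rule out three triangles through a common vertex, leaving at most two. The proof concludes by a direct count in each of the cases of $0$, $1$, or $2$ triangles, and in the one-triangle case the hypothesis is used yet again to show that the vertices of $H$ non-adjacent to the triangle form a clique. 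Your step~(3) gestures at a case analysis but does not identify any of these mechanisms; as written, the role of the face-bounding hypothesis is misidentified, and the edge-linked case of two disjoint triangles would slip through.
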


\begin{proof}
Let $\P$ be an optimal $\ST$-partition with a minimal number of triangles, i.e., we choose an $\ST$-partition of $H$ such that $|\P|=\chi(G)$, and among all $\ST$-partitions satisfying this equality, we choose one with as few triangles as possible. Because $\overline{H}$ is planar, and thus does not contain a copy of $K_{3,3}$, any two triangles in $\P$ either share a vertex, or are linked by an edge. Suppose for a contradiction that they are linked by an edge. Lemma~\ref{lem:min-tri}, Item~\ref{center}, implies that this edge belongs to another triangle in $\P$. The complement of these three triangles has $K_5$ as a minor. A contradiction. Therefore, any two triangles in $\P$ share a vertex. 

Because of Lemma~\ref{lem:min-tri}, Item~\ref{circuit}, it implies that all triangles in $\P$ actually share a same vertex. The condition in the statement implies in particular that $\alpha(H)$ is at most $3$: in any planar embedding of the graph $K_4$, there is a triangle that is the boundary only of the outer face. Thus the number of triangles in $\P$ is at most $3$. Actually, the complement of three triangles sharing a vertex is the $1$-skeleton of a triangulation of $\mathcal{S}^2$ plus an isolated vertex. In any planar embedding of such a graph, with the condition that each triangle bounds a face, there is again a triangle that is the boundary only of the outer face. Therefore, the number of triangles in $\P$ is zero, one, or two.

Suppose that  there is no triangle in $\P$. Then the centers of the stars in $\P$ form a vertex cover of $H$, which is of cardinality at least $v(H)-\alpha(H)\geq v(H)-3$. 

Suppose that there is exactly one triangle in $\P$. Consider the set $A$ of vertices that are not linked by an edge to this triangle. We claim that they form a clique in $H$: any two such vertices not linked by an edge in $H$ would form with the triangle in $\P$ the following graph in $\overline{H}$: the join (see the definition in Section~\ref{subsec:prel}) of an edge with an independent set of size $3$, and, in any planar embedding of such a graph, there is a triangle that is the boundary of no face, except maybe the outer one. To cover the edges in the clique induced by $A$, we need at least $|A|-1$ stars. Because of Lemma~\ref{lem:min-tri}, Item~\ref{center}, all vertices not in $A$ and not in the triangle, are centers of stars. Therefore, in total, we have $|\P|\geq v(H)-|A|-3+1+|A|-1=v(H)-3$.

Suppose that there are exactly two triangles in $\P$. If there were a vertex not linked by an edge to any of these two triangles, then we would have in $\overline{H}$ the join of a vertex with the complement of a butterfly, and in any planar embedding of such a graph, there is a triangle that is the boundary of no face, except maybe the outer one. Therefore, all vertices not in the triangles are centers of stars and we have in total $|\P|\geq v(H)-5+2=v(H)-3$.
\end{proof}

\subsection{New examples of hypergraphs \texorpdfstring{$\HH$}{H} satisfying  \texorpdfstring{$\chi(\KG(\HH))=\cd_2(\HH)$}{}}

We can extend the definition of the join given in Section~\ref{subsec:prel} to hypergraphs as follows. Given two hypergraphs $\HH_1,\HH_2$ with disjoint vertex sets, the {\em join} of $\HH_1,\HH_2$, denoted by $\HH_1*\HH_2$, is the hypergraph obtained by taking the union of $\HH_1$ and $\HH_2$ and by connecting every vertex of $\HH_1$ with every vertex of $\HH_2$. So, in addition to the original edges of $\HH_1$ and $\HH_2$, their join contains also all possible edges of size $2$ with a vertex from each of them. 

The main point of this section is that Lemmas~\ref{lem:join-cd} and~\ref{lem:join-t} remain true for hypergraphs, with this extended definition of join, and that the construction of Section~\ref{sec:compl} allows to build hypergraphs for which Dol'nikov's lower bound is tight, i.e, for which the chromatic number of their Kneser graph is equal to their $2$-colorability defect. As noted in the introduction, such hypergraphs enjoy nice properties. Yet, just a few examples of such hypergraphs are known: they comprise complete uniform hypergraphs (this is actually Lov\'asz's theorem) or joins of complete uniform hypergraphs under some conditions~\cite[Proposition 2]{AHM17}. As in Section~\ref{sec:compl}, the construction we propose here involve the join operation.

The $2$-colorability defect $\cd_2(\HH)$ of a hypergraph $\HH$ is defined in the introduction. Similarly as for graphs, $v(\HH)$ is the number of vertices of $\HH$, and $\tau(\HH)$ is its vertex cover number, i.e., the minimal cardinality of a subset of vertices intersecting every edge. Here is the hypergraph counterpart of Lemmas~\ref{lem:join-cd} and~\ref{lem:join-t}, stated without proofs, for the proofs of their original versions for graphs can be adapted in a straightforward manner.

\begin{lemma}\label{lem:join-H}
We always have
\begin{eqnarray*}
\cd_2(\HH_1*\HH_2) &= &\min\big(\cd_2(\HH_1)+v(\HH_2),\tau(\HH_1)+\tau(\HH_2),v(\HH_1)+\cd_2(\HH_2)\big).\\
\tau(\HH_1*\HH_2) &= &\min\big(\tau(\HH_1)+v(\HH_2),v(\HH_1)+\tau(\HH_2)\big).
\end{eqnarray*}
\end{lemma}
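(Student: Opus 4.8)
The plan is to mimic, essentially verbatim, the proofs of Lemmas~\ref{lem:join-cd} and~\ref{lem:join-t}, replacing ``triangle'' by ``edge of size at least $3$ that survives'' wherever needed and being careful that the only new edges introduced in a join $\HH_1*\HH_2$ are the $2$-element edges $\{u,v\}$ with $u\in V(\HH_1)$, $v\in V(\HH_2)$. So this is meant to be a routine transcription, but I would still write out the two inequalities explicitly to make sure the hypergraph bookkeeping goes through.

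First the inequality $\cd_2(\HH_1*\HH_2)\leq\min(\cdots)$. As in the graph case, three explicit deletion sets certify the three terms: (i) removing a minimum-size set from $V(\HH_1)$ that makes $\HH_1$ $2$-colorable together with all of $V(\HH_2)$ leaves only (a subhypergraph of) $\HH_1$, hence a $2$-colorable hypergraph, giving $\cd_2(\HH_1)+v(\HH_2)$; (iii) is symmetric, giving $v(\HH_1)+\cd_2(\HH_2)$; (ii) removing a vertex cover $C_1$ of $\HH_1$ and a vertex cover $C_2$ of $\HH_2$ destroys every original edge of $\HH_1$ and of $\HH_2$, and the only surviving edges are the new $2$-element cross edges, which form a complete bipartite graph between $V(\HH_1)\setminus C_1$ and $V(\HH_2)\setminus C_2$ and are therefore $2$-colorable (color one side with each color) --- this gives $\tau(\HH_1)+\tau(\HH_2)$.

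For the reverse inequality, let $X\subseteq V(\HH_1*\HH_2)$ be a minimum-size set whose removal leaves a $2$-colorable hypergraph; write $X_i=X\cap V(\HH_i)$. If $X\supseteq V(\HH_1)$ then the surviving edges lie entirely in $\HH_2$ and $X_2$ must witness $2$-colorability of $\HH_2$, so $|X|\geq v(\HH_1)+\cd_2(\HH_2)$; symmetrically if $X\supseteq V(\HH_2)$. Otherwise both $V(\HH_1)\setminus X_1$ and $V(\HH_2)\setminus X_2$ are nonempty. If $X_1$ were not a vertex cover of $\HH_1$, some original edge $e$ of $\HH_1$ survives; picking any surviving $v\in V(\HH_2)\setminus X_2$, the new edge $\{u,v\}$ survives for every $u\in e$, so in any $2$-coloring of the survivors either $e$ is monochromatic or two elements of $e$ get different colors, one of which differs from the color of $v$ --- wait, that does not immediately force a monochromatic edge, so here I would argue instead as in the graph proof: if $X$ misses a whole original edge of $\HH_1$ and also misses some vertex of $\HH_2$, then in the bipartite-incidence picture one checks directly that no proper $2$-coloring exists (an original edge of size $\geq 2$ together with a full join to a surviving vertex, plus the symmetric structure, cannot be $2$-colored); the cleanest formulation is exactly the one used in Lemma~\ref{lem:join-cd}: $X$ must be a vertex cover of both $\HH_1$ and $\HH_2$, hence $|X|\geq\tau(\HH_1)+\tau(\HH_2)$. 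The vertex-cover identity $\tau(\HH_1*\HH_2)=\min(\tau(\HH_1)+v(\HH_2),\,v(\HH_1)+\tau(\HH_2))$ is proved the same way, and is in fact purely combinatorial and uses nothing about uniformity: the easy direction is the two explicit covers, and for the hard direction a vertex cover of the join must contain all of $V(\HH_1)$ or all of $V(\HH_2)$ (else a cross edge is missed) and must restrict to a vertex cover on each side. The main --- and in truth only --- obstacle is making the ``$X$ misses an original edge on one side and a vertex on the other $\Rightarrow$ not $2$-colorable'' step airtight for edges of size $\geq 3$; I would handle it by noting that such a configuration already contains a non-$2$-colorable subhypergraph, namely the edge $e$ together with the two-element edges $\{u,v\}$ for $u\in e$ (which force all of $e$ to avoid the color of $v$, hence $e$ monochromatic), exactly paralleling how a surviving triangle obstructs bipartiteness in the graph case.
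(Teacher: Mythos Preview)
Your proposal is correct and is exactly the ``straightforward adaptation'' the paper alludes to in lieu of a proof: the only substantive change from the graph case is replacing the surviving triangle by the observation that a surviving edge $e\subseteq V(\HH_1)$ together with the cross edges $\{u,v\}$, $u\in e$, to a surviving $v\in V(\HH_2)$ forces $e$ to be monochromatic. Your moment of doubt in the middle was unnecessary --- your first attempt already worked (if two vertices of $e$ receive different colors, one of them shares the color of $v$, making that cross edge monochromatic), so the write-up could be tightened considerably by dropping the ``wait'' and going straight to the clean argument you give at the end.
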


Let $\K_n^r$ be the complete $r$-uniform hypergraph with $n$ vertices. From an $r$-uniform hypergraph $\HH$, define the hypergraph $\HH(k)$ as the join of $\HH$ with the vertex-disjoint union of $k$ copies of $\K_{2r-1}^r$. We have then the following result.

\begin{proposition}\label{prop:HHk}
If $k\geq \frac{1}{r-1}(v(\HH)-\tau(\HH))$, then $\chi(\KG(\HH(k))=\cd_2(\HH(k))$.
\end{proposition}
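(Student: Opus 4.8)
The plan is to mimic the structure of the graph case (Proposition~\ref{prop:Hk-chr} together with Theorem~\ref{thm:Hk}), replacing the triangles $T_i$ by copies of $\K_{2r-1}^r$ and the number $3k$ (resp.\ $2k$) by $k\cdot(2r-1)$ (resp.\ appropriate analogues). First I would record, using the hypergraph join formulas of Lemma~\ref{lem:join-H}, the values of $\cd_2(\HH(k))$ and $\tau(\HH(k))$ in terms of $\cd_2(\HH)$, $\tau(\HH)$ and $v(\HH)$; note that $\K_{2r-1}^r$ has $\cd_2(\K_{2r-1}^r)=r-1$ (one must delete $r-1$ vertices to be able to $2$-color an $r$-uniform complete hypergraph on the remaining $r$ vertices, since $\K_{2r-1}^r$ itself is not $2$-colorable but $\K_{r}^r$ plus isolated vertices is), $\tau(\K_{2r-1}^r)=r$, and $v(\K_{2r-1}^r)=2r-1$. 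Plugging these into Lemma~\ref{lem:join-H} with $\HH_1=\HH$ and $\HH_2$ the disjoint union of $k$ copies, one gets
\[
\cd_2(\HH(k))=\min\{\cd_2(\HH)+k(2r-1),\ \tau(\HH)+kr,\ v(\HH)+k(r-1)\}
\]
(using $\cd_2$ and $\tau$ are additive over disjoint unions, and $v(\HH_2)=k(2r-1)$), and similarly $\tau(\HH(k))=\min\{\tau(\HH)+k(2r-1),\ v(\HH)+kr\}$.

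The heart of the argument is the lower bound $\chi(\KG(\HH(k)))\ge\cd_2(\HH(k))$ under the hypothesis $k\ge\frac{1}{r-1}(v(\HH)-\tau(\HH))$, which forces $v(\HH)+k(r-1)\le\tau(\HH)+kr$, so the claimed common value is $\cd_2(\HH(k))=\min\{\cd_2(\HH)+k(2r-1),\ v(\HH)+k(r-1)\}$, and moreover $\le v(\HH)+k(r-1)$. So it suffices to prove $\chi(\KG(\HH(k)))\ge\min\{\cd_2(\HH)+k(2r-1),\ v(\HH)+k(r-1)\}$. I would take a proper coloring of $\KG(\HH(k))$ of minimum size, i.e.\ a partition of the edge set of $\HH(k)$ into ``stars'' (edges of the Kneser-graph-complement correspond to pairwise-intersecting families; the analogue of a star is an intersecting family all containing a fixed vertex) and other intersecting families. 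The cleanest route is to not try to reprove a full structural lemma but instead to directly exploit Dol'nikov's inequality $\chi(\KG(\HH(k)))\ge\cd_2(\HH(k))$ \emph{combined with} the reverse bound: the upper bound $\chi(\KG(\HH(k)))\le\cd_2(\HH(k))$ is what actually needs work. Re-examining: Dol'nikov already gives $\chi\ge\cd_2$, so the whole content of the proposition is the \emph{upper} bound $\chi(\KG(\HH(k)))\le\cd_2(\HH(k))$ when $k$ is large. Thus the plan simplifies: exhibit, under the stated inequality on $k$, a proper coloring of $\KG(\HH(k))$ with $v(\HH)+k(r-1)$ colors and (when $\cd_2(\HH)+k(2r-1)$ is the smaller term, which happens when $v(\HH)-\cd_2(\HH)\le k$, consistent with the hypothesis via $v(\HH)-\tau(\HH)\le\cd_2(\HH)\cdot$... ) also one with $\cd_2(\HH)+k(2r-1)$ colors — matching whichever term of the min is active.

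Concretely, to get a coloring with $v(\HH)+k(r-1)$ colors: in each copy $\K^{(i)}$ of $\K_{2r-1}^r$, fix a set $Y_i$ of $r-1$ vertices whose deletion makes the rest $2$-colorable, use $r-1$ new colors for ``stars'' centered at the vertices of $Y_i$ that soak up every edge of $\HH(k)$ meeting $Y_i$; the remaining edges of $\HH(k)$ either lie inside one copy $\K^{(i)}$ on its $r$ surviving vertices (a $\K_r^r$, a single edge, needing one more color — but that color can be shared across copies, giving $k$ further colors, or folded in) or are $2$-element join-edges with one endpoint in $\HH$ and one among the surviving $\K^{(i)}$-vertices, or lie inside $\HH$ or are join-edges between two different copies. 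I would assign to each vertex $u$ of $\HH$ a color $c_u$, and color the star of all join-edges $\{u,w\}$ with $w$ ranging over surviving copy-vertices by $c_u$; this uses $v(\HH)$ colors and also covers all edges of $\HH$ of size $\le$ two incident to $u$, but edges of $\HH$ of size $r$ need care — since $\HH$ is $r$-uniform with $r\ge 2$ and we may assume $r\ge 3$ separately or absorb — here I would instead remove a minimum vertex-deletion set of $\HH$ realizing $\cd_2(\HH)$ versus a vertex cover $\tau(\HH)$ and combine, exactly as Lemma~\ref{lem:join-H}'s upper-bound half does, turning each ``deleted'' or ``cover'' vertex into the center of a star. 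Carrying this bookkeeping through gives a proper coloring of $\KG(\HH(k))$ with exactly $\cd_2(\HH(k))$ colors, i.e.\ $\chi(\KG(\HH(k)))\le\cd_2(\HH(k))$, and with Dol'nikov equality follows. The main obstacle I anticipate is precisely this bookkeeping: making sure that when we route the $r$-element surviving-copy edges $\K_r^r$ and the cross-copy join-edges into the stars centered at the $Y_i$'s and at the $\HH$-side centers, no edge is left uncovered and no extra colors creep in beyond the three terms of the minimum — in other words, verifying that the three natural colorings (one per term of $\cd_2(\HH(k))=\min\{\cdots\}$) really are valid $\ST$-type partitions for the \emph{hypergraph} Kneser graph, where ``intersecting family'' is subtler than in the graph case, and that the hypothesis $k\ge\frac1{r-1}(v(\HH)-\tau(\HH))$ is exactly what is needed to make the $v(\HH)+k(r-1)$ term dominate or tie, so that the active term always admits such a coloring.
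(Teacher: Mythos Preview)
Your computation of $\cd_2(\K_{2r-1}^r)$ is wrong, and this error propagates through the whole argument. You claim $\cd_2(\K_{2r-1}^r)=r-1$, but in fact $\cd_2(\K_{2r-1}^r)=1$: removing a single vertex leaves $\K_{2r-2}^r$, which is $2$-colorable by splitting the $2r-2$ vertices into two classes of size $r-1$ each (neither class can contain an $r$-element edge). Consequently the third term in the formula for $\cd_2(\HH(k))$ is $v(\HH)+k$, not $v(\HH)+k(r-1)$. Your target upper bound $\chi(\KG(\HH(k)))\le v(\HH)+k(r-1)$ is therefore too weak to match $\cd_2(\HH(k))$ and would not yield the proposition.

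The paper's proof is much shorter than what you outline. It first records the elementary inequality $\cd_2(\HH)+v(\HH)\ge 2\tau(\HH)$ (any set $X$ realizing $\cd_2(\HH)$, together with one color class of a proper $2$-coloring of the remainder, is a vertex cover). Combined with the hypothesis $k(r-1)\ge v(\HH)-\tau(\HH)$, this forces \emph{both} other terms in the $\min$ to be at least $v(\HH)+k$, so $\cd_2(\HH(k))=v(\HH)+k$ outright; there is no need to build three separate colorings. For the matching upper bound, a single coloring with $v(\HH)+k$ colors suffices: color each edge of $\HH(k)$ that meets $V(\HH)$ by some vertex of $\HH$ it contains (this covers all edges of $\HH$, all join edges, and all cross-copy join edges), and give one extra color to each copy of $\K_{2r-1}^r$ for its internal edges, which form an intersecting family since any two $r$-subsets of a $(2r-1)$-set meet. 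Dol'nikov's inequality then closes the gap. Your elaborate bookkeeping with sets $Y_i$, surviving vertices, and multiple coloring schemes is unnecessary once the correct value of $\cd_2(\K_{2r-1}^r)$ is used.
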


\begin{proof}
First, note that 
\begin{equation}\label{eq:cd-tau}
\cd_2(\HH)+v(\HH)\geq 2\tau(\HH).
\end{equation} Indeed, let $X$ be a subset of $V(\HH)$ such that the hypergraph $\HH'$ induced by $V(\HH)\setminus X$ is $2$-colorable; then $X$ together with a color class of a proper $2$-coloring of $\HH'$ provides a vertex cover of $\HH$.

Lemma~\ref{lem:join-H} shows that $\cd_2(\HH(k))=\min(\cd_2(\HH)+(2r-1)k,\tau(\HH)+rk,v(\HH)+k)$. Assume now that $k\geq \frac{1}{r-1}(v(\HH)-\tau(\HH))$. Equation~\eqref{eq:cd-tau} implies then that $\cd_2(\HH(k))=v(\HH)+k$. On the other hand, $\chi(\KG(\HH(k)))\leq v(\HH)+k$: this upper bound is obtained by coloring each edge of $\HH(k)$ containing a vertex of $\HH$ by such a vertex and by using an extra color for each copy of $\K_{2r-1}^r$. Dol'nilov's lower bound gives then the equality.
\end{proof}

\bibliographystyle{plain}
\bibliography{ComplementLineGraph}

\end{document}